\newcommand{\ignore}[1]{}
\renewcommand{\Im}{\operatorname{Im}}
\newcommand{\C}{{\mathbb{C}}}
\newcommand{\R}{{\mathbb{R}}}
\newcommand{\N}{{\mathbb{N}}}
\newcommand{\sA}{{\mathcal{A}}}
\newcommand{\sO}{{\mathcal{O}}}
\newcommand{\sX}{{\mathcal{X}}}
\newcommand{\vnorm}[1]{\left\|  #1 \right\|}
\newcommand{\Cn}{\mathbb{C}^n}
\newcommand{\dopt}[2]{\frac{\partial #1}{\partial #2}}
\newcommand{\nequiv}{{\equiv \!\!\!\!\!\!  / \,\,}}
\newcommand{\holmaps}{\mathcal{H}}
\newcommand{\fps}[1]{\C\llbracket #1 \rrbracket}
\newcommand{\cps}[1]{\C\{#1\}}
\newcommand{\diffable}[1]{\mathcal{C}^{#1}}
\newcommand{\idealsheaf}{\mathcal{I}}
\newcommand{\inp}[1]{\langle #1 \rangle}
\newcommand{\averaging}{\mathcal{A}}
\newcommand{\restricted}{\mathcal{R}}
\newcommand{\hompoly}{\mathcal{P}}
\newtheorem{thm}{Theorem}[section]
\newtheorem{prop}[thm]{Proposition}
\newtheorem{lemma}[thm]{Lemma}
\theoremstyle{definition}
\newtheorem{defn}[thm]{Definition}
\newtheorem{example}[thm]{Example}
\theoremstyle{remark}
\author{Bernhard Lamel}
\author{Ji\v{r}\'i Lebl}
\date{\today}
\title{Segre nondegenerate totally real subvarieties}
\subjclass[2010]{32V05,32V40,14B05,14P15}
\begin{document}


\begin{abstract}
We study an irreducible real-analytic germ of an $n$-dimensional variety
in $n$ dimensional complex space.  Assuming that the variety is
Segre nondegenerate we define an averaging operator that generalizes
the Moser--Webster involution.  This operator can be thought of as being the
CR structure of the singularity, and using this operator we study the set of
functions that are restrictions of holomorphic functions.  We give a
condition on the flattening of the singularity, that is realizing the
singularity as a codimention one subvariety of a nonsingular Levi-flat
hypersurface.
\end{abstract}

\maketitle



\section{Introduction}

A natural question in complex analysis is the following:

\medskip

\emph{Given a set $X
\subset \C^n$, characterize those functions $f \colon X \to \C$ that are
restrictions of holomorphic functions defined in a neighborhood of $X$.}

\medskip

When $X$ is a real-analytic
CR submanifold, then the answer is well-understood, it is the set of
real-analytic CR functions, that is functions that satisfy the
Cauchy--Riemann equations restricted to $X$.  If $X$ is a generic
$n$-dimensional submanifold, such as $X=\R^n \subset \C^n$,
then the CR structure of $X$ is trivial, and every
real-analytic function is the restriction of a holomorphic function.

If $X$ is singular, then the answer is much more difficult.  The case that
we are interested in is the local question near a singular point
of a real-analytic subvariety of dimension $n$.  At a nondegenerate
singular point we will be define a finite alternative to the Cauchy--Riemann
equations.  The CR equations normally say that a holomorphic function is
constant along a certain (complex) direction.  In our setting, we will
replace this complex direction with a finite set of points.
The setting applies also to CR singular manifolds, and given a Bishop
surface, that is a 2 dimensional real submanifold of $\C^2$ with a
nondegenerate complex tangent, see \cite{MR200476}.
This finite set of points are precisely the two from the
Moser--Webster involution, see \cite{MR709143}.
In $\C^2$, the CR singular case, focused mainly on normal forms,
was studied further by
Moser~\cite{MR802502},
Kenig--Webster~\cites{MR664323},
Harris~\cite{MR773068},
Gong~\cite{MR1303227},
Huang--Krantz~\cite{MR1328757}, 
Huang--Yin~\cite{MR2501295},
and others.
Harris~\cite{MR477120} studied the restriction question on
a CR singular submanifold in
terms of vector fields defined on $M$.
Lebl--Noell--Ravisankar~\cite{MR3663330} proved that functions
satisfying a moment condition on the elliptic Bishop surface
are restrictions of holomorphic functions.
CR singular submanifolds of dimension $n$ have
similarly been studied by Webster~\cite{MR800003},
Kenig--Webster~\cite{MR764946},
Huang~\cite{MR1603854},
Coffman~\cite{MR2650710}, 
Ahern--Gong~\cite{MR2518102},
Gong--Stolovich~\cites{MR3570294,MR3948229}
and others.
See also the survey by Huang~\cite{MR3647129}.

The Segre variety of an
$n$-dimensional real subvariety of $\C^n$ is generically a finite set
of points.
As one can average over a finite set of points, we obtain an operator
from the real power series $\cps{z,\bar z}$ to the holomorphic power
series $\cps{z}$.  The operator is given explicitly, and can be computed
for a given subvariety up to any given order.  The operator reproduces the
holomorphic functions, and therefore can be used to answer the motivating
question: What are the restrictions of holomorphic functions?

The operator can also be used to attempt to find normal forms for $X$.
As $\R^n$ is given by the vanishing of the imaginary part of $n$ independent
holomorphic functions, one can similarly ask to find at least one such
function for a singular $X$.  Such a function we call a ``flattening'',
as it gives a Levi-flat hypersurface that contains $X$.
The flattening question is equivalent to finding
a holomorphic function $f$ such that $\bar{f} = f$ on $X$, or in other words
when the averaging operator is applied to $\bar{f}$ it simply yields $f$.
As the operator works formally, one can use it to explicitly find
obstructions to flattening.

The flattening question only requires the restriction of the averaging
operator to $\cps{\bar{z}}$.  It turns out that this restriction uniquely
describes the germ of $X$ at a point.  That is, finding local
normal forms for $X$ is equivalent to finding the normal forms for
the restricted operator.

Let us state our main results more precisely.
Let $(X,0) \subset (\C^n,0)$ be an (irreducible) $n$-dimensional
germ of a real-analytic subvariety at the origin in $\Cn$. We denote
by $\idealsheaf_0 (X) \subset \cps{z,\bar z}$ the ideal of germs 
of real-analytic functions vanishing on $X$. We say that  $(X,0)$ is
\emph{Segre nondegenerate} 
if the ideal 
$ \mathcal{J} = \left\{ \varrho(z,0) \colon \varrho \in \idealsheaf_0 (X) \right\} \subset \cps{z} $
is an ideal of definition, i.e. if there exists a $k\in \N$ such 
that the maximal ideal $\mathfrak{m}\subset\cps{z}$ 
satisfies $\mathfrak{m}^k \subset \mathcal{J}$, or equivalently,
if its vanishing locus satisfies $V(\mathcal{J}) = \left\{ 0 \right\}$.

Let $\sO_0$ be the ring of germs of holomorphic functions.  We say a
germ $(f,0)$ of a function is real-analytic on $(X,0)$ if there is a
germ of a real-analytic function
defined in $(\C^n,0)$ whose restriction to $(X,0)$ is $(f,0)$.  In this case
we will often identify $(f,0)$ with its extension.  We say $(f,0)$ is a
restriction of a holomorphic function if there exists an extension
such that $(f,0) \in \sO_0$.

To make notation easier, we will sometimes drop the $(\cdot,0)$ notation when not
absolutely necessary for clarity.  Given a small enough neighborhood,
a real-analytic function has a unique representative and we will generally
identify the germ with one of its representatives.

Let $(X,0) \subset \C^n$ be an irreducible germ of a Segre nondegenerate
$n$-dimensional real-analytic subvariety of multiplicity $k$.  That is,
the Segre variety generically has $k$ points.
Given a real-analytic function $f(z,\bar{z})$ we let
$\averaging f$ be the average $\frac{1}{k} \sum_{j} f(z,\xi_j)$
over the points $\xi_1,\ldots,\xi_k$ of the Segre variety at $z$.

Our first main result is that $f$ is a restriction of a holomorphic
function if and only if
\begin{equation*}
\averaging (f^\ell) = {(\averaging f)}^\ell \text{ for all } \ell = 1,\ldots,k.
\end{equation*}

Let us call $\restricted$ the restriction of
$\averaging$ to the antiholomorphic functions.  It turns out that
$\restricted$ contains all the information about $(X,0)$ and $\averaging$,
that is, finding a normal form for $\restricted$ is equivalent to
finding a normal form for $(X,0)$.

A natural question about the normal form of $(X,0)$ is the so-called
flattening.  That is, does there exist a holomorphic function
that is real-valued on $(X,0)$.  We prove that
a holomorphic function $f$ is real-valued
on $X$ if and only if
\begin{equation*}
\restricted (\bar{f}^\ell) = f^\ell \text{ for all } \ell = 1,\ldots,k.
\end{equation*}

Let us outline the structure of this paper.
In \S\ref{sec:prelim}, we explain the notation and the setup
of the problem including complexification and the Segre varieties.
In \S\ref{sec:obstr}, we study the obstructions for a function to be
the restriction of a holomorphic function.
In \S\ref{sec:averaging}, we define the averaging operator.
In \S\ref{sec:flattening}, we discuss the restricted averaging operator,
and its application to flattening and show that it contains all
the necessary information to define $X$.
In \S\ref{sec:examplesofflat}, we work out the flattening in some examples.

\section{Preliminaries} \label{sec:prelim}

We start with some properties of the extrinsic complexification 
of a Segre-nondegenerate germ: we recall that 
$(\mathcal{X},0) \subset (\C^{2n}_{z,\xi},0)$ is a complexification 
of $(X,0)$ if $\mathcal{X} \cap \left\{ z = \overline{\xi} \right\}$ as 
germs at the origin. We denote the projections onto 
the first and the second factor by 
$\pi_1 (z,\xi) = z$ and $\pi_2 (z,\xi) = \xi$, respectively. 

\begin{prop}
Let $(X,0) \subset (\C^n_z,0)$ be a germ of a Segre nondegenerate
irreducible $n$-dimensional subvariety at the origin.
Then there exist polydiscs $\Delta_z \subset \C^n$ and $\Delta_\xi \subset
\C^n$, centered
at the origin,
and an $n$-dimensional closed complex subvariety
$\sX \subset \Delta_z \times \Delta_\xi$ which is irreducible
both globally and at the origin, such that $(\sX,0)$ is
the complexification of $(X,0)$, and
an integer $k$, such that $\pi_1|_{\mathcal{X}}$ is finite, 
and $(\pi_1|_{\mathcal{X}})^{-1} (z)$ consists of $k$ 
points counting multiplicity for 
$z \in \Delta_z$, and furthermore $(\pi_1|_{\mathcal{X}})^{-1}(0) = \{ 0 \}$.

The same statement holds with a possibly different pair 
of polydiscs $\tilde\Delta_z \subset \C^n$ and $\tilde\Delta_\xi \subset
\C^n$ and the projection onto the second coordinate $\pi_2$.
\end{prop}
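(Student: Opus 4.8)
The plan is to build $\sX$ by complexifying a finite generating set of the real ideal, and then to extract every asserted property from two facts: that restriction to a maximally totally real slice is a ring isomorphism (which transports primality and dimension), and that Segre nondegeneracy is exactly the statement that one of the coordinate projections has trivial fiber over the origin. First I would fix finitely many generators $\varrho_1,\dots,\varrho_m$ of $\idealsheaf_0(X)$ (possible since $\cps{z,\bar z}$ is Noetherian), choose polydiscs on which the complexifications $\varrho_j^{\C}(z,\xi)$ converge, and set $\sX = V(\varrho_1^{\C},\dots,\varrho_m^{\C})$, so that $\sX \cap \{z = \bar\xi\}$ is the germ $X$ and the real points of $\sX$ are exactly $X$. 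Restriction of holomorphic germs to $\{\xi = \bar z\}$ is a ring isomorphism $\cps{z,\xi} \xrightarrow{\sim} \cps{z,\bar z}$ carrying $\idealsheaf_0(X)^{\C}$ to $\idealsheaf_0(X)$; since $X$ is irreducible, $\idealsheaf_0(X)$ is prime, hence $\mathfrak{p} := \idealsheaf_0(X)^{\C}$ is prime and $(\sX,0) = V(\mathfrak p)$ is an irreducible germ. The same isomorphism identifies the local rings, and I would pin down $\dim_\C(\sX,0) = n$: the real points of $\sX$ are $X$, of real dimension $n$, and the real points of a complex germ of dimension $d$ have real dimension at most $d$, so $\dim_\C \sX \ge n$; the reverse inequality will drop out of the fiber computation below via upper semicontinuity of fiber dimension.

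Next I would translate the hypothesis. The ideal $\mathcal J = \{\varrho(z,0)\}$ is the image of $\mathfrak p$ under $\xi \mapsto 0$, so $V(\mathcal J) = \pi_1\bigl((\pi_2|_\sX)^{-1}(0)\bigr)$, and Segre nondegeneracy $V(\mathcal J) = \{0\}$ says precisely that $(\pi_2|_\sX)^{-1}(0) = \{0\}$. The reality of $\idealsheaf_0(X)$ makes $\mathfrak p$ invariant under $\varrho^\C(z,\xi) \mapsto \overline{\varrho^\C(\bar\xi,\bar z)}$, so the anti-holomorphic involution $\sigma(z,\xi) = (\bar\xi,\bar z)$ preserves $\sX$ and interchanges the two fibers over the origin; hence $(\pi_1|_\sX)^{-1}(0) = \{0\}$ as well. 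If instead $\dim_\C \sX > n$, the generic fiber of $\pi_1$ over $\C^n$ would have positive dimension, and upper semicontinuity would force the fiber over $0$ to be positive-dimensional, contradicting the previous sentence; so $\dim_\C \sX = n$. With an isolated fiber over $0$ and $\dim_\C \sX = n$, the local finiteness (Weierstrass / Grauert--Remmert) theorem applies: after shrinking to suitable $\Delta_z \times \Delta_\xi$, the map $\pi_1|_\sX$ is a proper finite surjection onto $\Delta_z$, that is, a branched covering of a well-defined degree $k$ with $\sum_{x \in (\pi_1|_\sX)^{-1}(z)} \operatorname{mult}_x = k$ for every $z \in \Delta_z$ and $(\pi_1|_\sX)^{-1}(0) = \{0\}$.

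It remains to upgrade germ irreducibility to global irreducibility and to dispatch the $\pi_2$ statement. If $\sX = \sX_1 \cup \sX_2$ globally with each $\sX_i$ of dimension $n$, then finiteness forces $\pi_1(\sX_i) = \Delta_z$, so $0 \in \pi_1(\sX_i)$; since the fiber over $0$ is the single point $0$, both $\sX_i$ pass through the origin, and irreducibility of the germ $(\sX,0)$ gives $(\sX_1,0) = (\sX_2,0)$, whence $\sX_1 = \sX_2$ by the identity principle for irreducible analytic sets. The final assertion follows by rerunning the identical argument with the roles of $z$ and $\xi$ exchanged (equivalently, by transporting everything through $\sigma$), which is legitimate because the construction is symmetric in the two factors and the fiber condition holds on both sides.

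The main obstacle is the coherent choice of the two polydiscs: I must shrink $\Delta_z$ and $\Delta_\xi$ so that simultaneously $\sX$ is closed in $\Delta_z \times \Delta_\xi$, the projection $\pi_1$ is genuinely proper (not merely finite-to-one as a germ) with constant fiber count $k$, and no spurious global components survive. This is exactly where the abstract germ-level facts (primality, isolated fiber) must be promoted to honest representatives, and where the finiteness theorem for holomorphic maps does the real work; the reality and symmetry bookkeeping is comparatively formal.
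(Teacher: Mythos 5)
Your proposal is correct, and its skeleton is the same as the paper's: build the complexification $\sX$, transfer irreducibility from $(X,0)$, use Segre nondegeneracy to get an isolated fiber over the origin, and then invoke the local finiteness/branched-covering theorem (the paper cites Whitney's multigraph representation; your Grauert--Remmert reference is the same tool) to obtain the degree-$k$ covering after shrinking, with $\pi_2$ handled by symmetry. The differences are in how the individual steps are justified, and in each case you are somewhat more explicit than the paper. First, the paper defines $\sX$ geometrically as the smallest complex germ containing $\{(z,\bar z)\colon z\in X\}$ and gets irreducibility by restricting components to the diagonal, whereas you define $\sX = V(\mathfrak p)$ with $\mathfrak p$ the complexified ideal and transport primality through the isomorphism $\cps{z,\xi}\cong\cps{z,\bar z}$; this is clean, though note that primality of the \emph{complex-valued} ideal $\idealsheaf_0(X)$ requires a small argument from irreducibility (e.g., $fg=0$ on $X$ implies $|f|^2|g|^2=0$ on $X$, then use primality of the real-valued ideal twice) or an appeal to Bruhat--Whitney. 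Second, you correctly observe that the definition of $\mathcal J$ (setting $\bar z=0$) directly controls the $\pi_2$-fiber, and you pass to the $\pi_1$-fiber via the reflection involution $\sigma(z,\xi)=(\bar\xi,\bar z)$; the paper silently asserts the $\pi_1$-fiber statement, so your bookkeeping here is actually more careful than the source. Third, where the paper disposes of global irreducibility with "making the neighborhood small enough," you give a real argument: every $n$-dimensional component surjects onto $\Delta_z$ by properness and Remmert, hence passes through the unique point over $0$, and germ irreducibility plus the identity principle collapses the components. One small overstatement there: irreducibility of $(\sX,0)$ does not directly give $(\sX_1,0)=(\sX_2,0)$, only that one of the germs equals $(\sX,0)$ and hence contains the other; but that containment is exactly what your identity-principle step needs, so the conclusion stands.
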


\begin{defn}
If $(X,0) \subset (\C^n,0)$ is a germ at 0 of a Segre nondegenerate
$n$-dimensional subvariety, then we call
the polydiscs $\Delta_z \times \Delta_\xi$ \emph{$\pi_1$-good} for $(X,0)$,
if they are small enough as above,
and we will call the closed subvariety $\sX \subset \Delta_z
\times \Delta_\xi$ the \emph{corresponding complexification}; the polydiscs $\tilde\Delta_z \times \tilde\Delta_\xi $ will be said to be {\em $\pi_2$-good}, and the corresponding complexification
is defined likewise. We will 
call $k$ the {\em Segre multiplicity} of $(X,0)$.
\end{defn}

The germ of $\mathcal{X}$ at the origin 
is well defined, and for small neighbourhoods
the corresponding complexification is 
just a representative of that germ in 
that neighbourhood. We will therefore use
 $\mathcal{X}$ as a notation 
 for the complexification in any (small
 enough) neighbourhood.

\begin{proof} We only the prove the corresponding 
statement for the $\pi_1$-good polydiscs; the $\pi_2$-good polydiscs 
are done analogously. 
First we find a small enough neighborhood of the origin
and a complexification that is irreducible at the origin.
That follows by simply taking the smallest germ
of a complex subvariety that contains the germ of the
set
\begin{equation*}
\{ (z,\xi) : \bar{z} = \xi , z \in X \} 
\end{equation*}
at the origin, for some representative $X$ of $(X,0)$.
If $\sX$ were not irreducible, it would imply
$(X,0)$ is also reducible by restricting the
components of $\sX$ to the diagonal $\bar{z} = \xi$.
Making the neighborhood $\Delta_z \times \Delta_\xi$
small enough we can ensure that the
complexification is irreducible for any other smaller polydisc
neighborhood.

The subvariety $X$ is $n$-real-dimensional,
and hence $\sX$ is $n$-complex-dimensional.
Because $X$ is Segre nondegenerate, then the $n$-dimensional
subspace $\{ 0 \} \times \C^n \subset \C^n \times \C^n$ intersects
$\sX$ at an isolated point at the origin.  In particular,
this means that locally near the origin, $\sX$ is a multigraph
of a $k$-valued holomorphic mapping (see \cite{MR0387634}).
We can now 
choose $\Delta_z$ small enough that $(\pi_1|_{\mathcal{X}})^{-1}$ has generically $k$
preimages in $\Delta_\xi$ (exactly $k$ counting multiplicity),
and furthermore that $(\pi_1|_{\mathcal{X}})^{-1}(0)$ is the origin alone.
\end{proof}

In terms of the ideal $\idealsheaf_0 (X) \subset \cps{z,\bar z}$ 
of $(X,0)$, i.e. the set of 
germs of real-analytic functions at $0$ vanishing on $X$, 
we have that $\idealsheaf_0 (X) $ is 
a {\em real} ideal. That is, $\iota (\idealsheaf_0 (X) ) \subset \idealsheaf_0  (X)$, where 
$(\iota \varrho )(z, \bar z) = \bar \varrho( \bar z, z)$. One can check that the 
ideal of any complexification $\mathcal{X}$ (in either a $\pi_1$- or a $\pi_2$-good polydisc)  at the origin is given by 
$\idealsheaf_0 ( \mathcal{X}) = \left\{ \varrho(z,\xi) \colon
\varrho(z,\bar z) \in \idealsheaf_0 (X)  \right\}$. Now, 
because $\idealsheaf_0 (X) $ is real, we have that the involution
 $\iota\colon \cps{z,\xi} \to \cps{z,\xi} $ defined by 
 $\iota(\varrho) (z,\xi) = \bar \varrho(\xi, z)$ leaves 
 $\idealsheaf_0 (\mathcal{X})$ invariant. For a set $A\subset \Cn$ we 
 are going to denote the by $A^*$ the 
 set of complex conjugates of elements of $A$. The preceding algebraic fact has the following geometric
 interpretation: 

\begin{prop}\label{prop:reflection} Let $(X,0) \subset (\C^n_z,0)$ be a germ of a Segre nondegenerate
irreducible $n$-dimensional subvariety at the origin, $\mathcal{X}$ a complexification 
of $X$. Then for a small enough neighbourhood $\Delta$ of the origin,
the map $S \colon \Delta \times  \Delta^* \to \Delta \times  \Delta^*$ defined by $S(z,\xi) = (\bar \xi, \bar z)$ leaves $\mathcal{X}\cap(\Delta \times  \Delta^*) $ invariant.  
\end{prop}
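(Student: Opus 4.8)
The plan is to reduce the geometric invariance of $\mathcal{X}$ under $S$ to the algebraic invariance of $\idealsheaf_0(\mathcal{X})$ under $\iota$ already recorded above, by way of a single pointwise identity relating the two involutions. First I would establish that identity. Writing a convergent power series as $\varrho(z,\xi) = \sum_{\alpha,\beta} a_{\alpha\beta} z^\alpha \xi^\beta$, its conjugate is $\bar\varrho(z,\xi) = \sum_{\alpha,\beta} \overline{a_{\alpha\beta}} z^\alpha \xi^\beta$, so by definition $\iota(\varrho)(z,\xi) = \bar\varrho(\xi,z) = \sum_{\alpha,\beta} \overline{a_{\alpha\beta}} \xi^\alpha z^\beta$. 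Conjugating the numerical value then yields
\begin{equation*}
\overline{\iota(\varrho)(z,\xi)} = \sum_{\alpha,\beta} a_{\alpha\beta} \bar\xi^\alpha \bar z^\beta = \varrho(\bar\xi,\bar z) = \varrho\bigl(S(z,\xi)\bigr),
\end{equation*}
that is, $\varrho \circ S = \overline{\iota(\varrho)}$ on their common domain.

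Next I would fix the neighborhood. Taking $\Delta$ to be a polydisc (or ball) centered at the origin gives $\Delta^* = \Delta$, so that $S(z,\xi) = (\bar\xi,\bar z)$ genuinely maps $\Delta \times \Delta^*$ into itself and, being an anti-holomorphic involution, satisfies $S^2 = \id$. I then shrink $\Delta$ so that $\mathcal{X} \cap (\Delta \times \Delta^*)$ is cut out by generators $\varrho_1,\ldots,\varrho_m$ of $\idealsheaf_0(\mathcal{X})$ converging on $\Delta \times \Delta^* = \Delta \times \Delta$. Since this domain is invariant under interchanging the two $\C^n$-factors, each $\iota(\varrho_j)$, whose natural domain of convergence is that of $\varrho_j$ with the factors swapped, also converges there.

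With these in hand the conclusion is immediate. Suppose $(z,\xi) \in \mathcal{X} \cap (\Delta \times \Delta^*)$, so that $\varrho(z,\xi) = 0$ for every $\varrho \in \idealsheaf_0(\mathcal{X})$. By the algebraic invariance recalled above, $\iota(\varrho) \in \idealsheaf_0(\mathcal{X})$ as well, hence $\iota(\varrho)(z,\xi) = 0$, and the identity gives $\varrho(S(z,\xi)) = \overline{\iota(\varrho)(z,\xi)} = 0$. As this holds for all $\varrho$, we conclude $S(z,\xi) \in \mathcal{X} \cap (\Delta \times \Delta^*)$; applying $S$ once more and using $S^2 = \id$ gives the reverse inclusion, so $S$ leaves $\mathcal{X} \cap (\Delta \times \Delta^*)$ invariant.

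I expect the only genuine obstacle to be bookkeeping about domains rather than anything substantive. One must choose $\Delta$ small enough that the germ-level description $\idealsheaf_0(\mathcal{X}) = \{\varrho(z,\xi) : \varrho(z,\bar z) \in \idealsheaf_0(X)\}$ is faithfully represented as an honest zero set in $\Delta \times \Delta^*$, and symmetric enough that $\iota$ preserves the class of functions converging on that product; the origin-centered choice $\Delta^* = \Delta$ is exactly what reconciles these two requirements. Once the neighborhood is fixed in this symmetric way, the statement is a one-line consequence of the conjugation identity and the previously established invariance of the ideal.
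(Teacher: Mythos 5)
Your proof is correct and takes essentially the same approach as the paper's: both reduce the invariance of $\mathcal{X}$ under $S$ to the reality of the ideal via the conjugation identity $\overline{\varrho(z,\xi)} = \bar\varrho(\bar z,\bar\xi)$, using symmetric (origin-centered) polydiscs so that all functions involved converge on the relevant product. The only cosmetic difference is that the paper chooses $\iota$-fixed (real) generators $\varrho_j$, so each defining equation is itself symmetric, whereas you keep arbitrary elements of $\idealsheaf_0(\mathcal{X})$ and instead invoke the recorded $\iota$-invariance of the whole ideal through the identity $\varrho \circ S = \overline{\iota(\varrho)}$ --- both arguments are one-line conjugations resting on the same fact.
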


\begin{proof}
We can choose real generators of $\idealsheaf_0 (\mathcal{X})$, i.e. 
germs $\varrho_1,\dots ,\varrho_p \in \cps{z,\xi}$ satisfying 
$\varrho_j (z, \xi) = \bar \varrho_j (\xi, z)$ for $j = 1, \dots ,p$.
Now, for $z$ and $\xi$ sufficiently close to $0$, 
$(z,\xi) \in \mathcal{X}$ if and only if $\varrho_j (z,\xi) = 0$
for all $j$, which in turn is equivalent to $\bar \varrho_j (\bar z, \bar \xi)= \varrho_j (\bar \xi, \bar z) =0$ for $j=1, \dots, p$, i.e. $(\bar \xi, \bar z) \in \mathcal{X}$. 
\end{proof}

Proposition~\ref{prop:reflection} allows us to identify representatives
of $(X,0)$ with the diagonal $z= \bar \xi$ in  $\mathcal{X}\cap (\Delta \times \Delta^*)$ for 
small polydiscs $\Delta$. Whenever we need to refer to a representative 
of the germ $(X,0)$, we will choose one of the form constructed in this proposition.

%
%


\begin{prop}\label{p:genericallytotallyreal}
Let $(X,0)$ is an irreducible $n$-dimensional germ of a real subvariety at the
origin.  If $(X,0)$ is Segre nondegenerate, 
then at a generic 
dimension-$n$ regular point of
a small enough representative, $X$ is
a maximally totally real submanifold.  
Furthermore, the germ $(X,0)$ is not
contained in any germ of a proper 
complex analytic subvariety at the origin.
\end{prop}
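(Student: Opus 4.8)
The plan is to establish the two assertions in the logically convenient order, proving the non-containment statement first and then feeding it into the total reality statement.

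For the non-containment, I would argue by contradiction using the complexification $\mathcal{X}$ and the finiteness of $\pi_1$. Suppose $(X,0) \subset (V,0)$ for a proper complex-analytic germ $V \subset \C^n_z$, so that $\dim_\C V \le n-1$. Writing $V^* = \{\bar w : w \in V\} \subset \C^n_\xi$ for the conjugate variety, every point $(z,\bar z)$ with $z \in X$ lies in the complex subvariety $V \times V^*$; since $\mathcal{X}$ is by construction the smallest complex subvariety containing these points, $\mathcal{X} \subset V \times V^*$ and hence $\pi_1(\mathcal{X}) \subset V$. But $\pi_1|_{\mathcal{X}}$ is finite with $(\pi_1|_{\mathcal{X}})^{-1}(z)$ nonempty for every $z \in \Delta_z$, so $\pi_1(\mathcal{X}) = \Delta_z$ is $n$-dimensional, contradicting $\dim_\C V \le n-1$. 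This is precisely where Segre nondegeneracy enters.

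For total reality I would localize at a good point. Let $B \subset \Delta_z$ be the image under $\pi_1$ of the union of the singular locus of $\mathcal{X}$ with the ramification locus of $\pi_1|_{\mathcal{X}}$; this is a proper complex subvariety of $\Delta_z$. By the first part $X \not\subset B$, so the dimension-$n$ regular points of $X$ lying off $B$ form a dense subset, and I claim $X$ is maximally totally real at every such point $p$. Near $p$ the point $(p,\bar p)$ is a smooth point of $\mathcal{X}$ at which $\pi_1$ is a local biholomorphism, so $\mathcal{X}$ is the graph $\xi = g(z)$ of a holomorphic map with $g(p) = \bar p$; consequently $X$ satisfies $\bar z = g(z)$ near $p$. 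Differentiating this relation along curves in $X$ yields $T_p X \subseteq \{v \in \C^n : \bar v = A v\}$, where $A = g'(p)$.

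It remains to identify this space and check its dimension. The reflection $S(z,\xi) = (\bar\xi,\bar z)$ of Proposition~\ref{prop:reflection} fixes $(p,\bar p)$ and preserves $\mathcal{X}$; applying its differential to the tangent graph $\{(w,Aw) : w \in \C^n\}$ forces the relation $A\bar A = I$. Then $\tau(v) = \bar A \bar v$ is an antilinear involution of $\C^n$ whose fixed-point set is exactly $\{v : \bar v = A v\}$, hence a totally real subspace of real dimension $n$ with $\C^n = \operatorname{Fix}(\tau) \oplus i\,\operatorname{Fix}(\tau)$. Since $p$ is a regular point of dimension $n$, $\dim_\R T_p X = n$, so the inclusion $T_p X \subseteq \operatorname{Fix}(\tau)$ is an equality; in particular $T_p X \cap i\, T_p X = \{0\}$ and $X$ is maximally totally real at $p$. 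The main obstacle I anticipate is the genericity bookkeeping: one must verify that the dimension-$n$ regular points avoiding $B$ are genuinely dense in $X$ — which uses irreducibility of $(X,0)$ together with the non-containment already proved — and confirm that the graph description of $\mathcal{X}$ is valid exactly off $B$.
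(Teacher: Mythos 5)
Your first part (non-containment) is correct and is essentially the paper's own argument: $X \subset V$ forces $\mathcal{X} \subset V \times V^*$, hence $\pi_1(\mathcal{X}) \subset V$, contradicting the fact that the finite projection $\pi_1|_{\mathcal{X}}$ hits every point of $\Delta_z$. Your tangent-space computation at a dimension-$n$ regular point $p \notin B$ is also correct; in fact you prove more than you need, since $\{v \in \C^n : \bar v = Av\}$ is totally real for \emph{any} matrix $A$ (if $v$ and $iv$ both lie in it, then $\bar v = Av$ and $\bar v = -Av$, so $v = 0$), so the reflection argument giving $A\bar A = I$ and the real-form decomposition, while valid, are not required to conclude that $T_pX$ is maximally totally real.

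The genuine gap is the sentence ``By the first part $X \not\subset B$, so the dimension-$n$ regular points of $X$ lying off $B$ form a dense subset.'' This is a non sequitur: $X \not\subset B$ produces points of $X$ off $B$, but nothing you have proved prevents all of those points from being lower-dimensional or singular points of $X$, with the \emph{entire} dimension-$n$ regular stratum contained in $B$ --- in which case your argument establishes the proposition vacuously. This cannot be dismissed as bookkeeping: irreducible real-analytic germs genuinely exhibit ``umbrella'' behavior, with lower-dimensional handles sticking out and the top-dimensional regular part failing to be dense, so the density you assert (density in $X$, no less) is false in general and does not follow from irreducibility plus non-containment alone. The paper avoids the issue by reversing the logic: for an \emph{arbitrary} point $z_0 \in X$ off the discriminant (such points exist by the first part), the graph description gives $X = \{\bar z = g(z)\}$ near $z_0$, and this zero set is automatically an $n$-dimensional maximally totally real manifold --- it is the fixed-point set of $z \mapsto \overline{g(z)}$, which is a local antiholomorphic involution by the same Segre reflection you invoke only infinitesimally; thus regularity of $X$ at $z_0$ is a \emph{conclusion}, not a hypothesis, and the needed good points exist. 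Alternatively, you could close the gap directly: if an open piece $W$ of the dimension-$n$ regular part lay inside $B$, then $\{(z,\bar z) : z \in W\}$ would be an $n$-dimensional totally real manifold (it sits inside the totally real antidiagonal of $\C^{2n}$) contained in $\mathcal{X} \cap \pi_1^{-1}(B)$, a complex variety of dimension at most $n-1$ by irreducibility of $\mathcal{X}$; this is impossible, since a totally real manifold contained in a complex variety of dimension $d$ has real dimension at most $d$.
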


\begin{proof}
Let $\Delta_z \times \Delta_\xi$ be good for $(X,0)$ and let
$\sX$ be the corresponding complexification.
The projection $(\pi_1|_\mathcal{X})$ is generically $k$-to-1.  
If $(X,0) \subset (Y,0) $ for a germ of a 
proper complex analytic subvariety  
then 
$(\pi_1|_\mathcal{X})^{-1}(z)$ would be empty for $z \notin Y$.  
Hence, $(X,0)$ is not contained in any
proper complex subvariety.


Since the discriminant set of $\pi_1|_\mathcal{X}$ is a complex subvariety
in $\Delta_z$, at a 
 generic point $z_0 \in X$ we  have that 
$(z_0,\bar{z}_0)$ is a regular point of $\sX$,
and hence $z_0$ is a regular point of $X$.
Then locally near $(z_0,\bar{z}_0)$, $\sX$ can be written as
 a graph $\xi = g(z)$, and hence
$X$ near $z_0$ is given by the (vector) equation $\bar{z} = g(z)$.  In other
words $X$ near $z_0$ is a maximally totally real submanifold.
\end{proof}

\subsection{Symmetric functions and standard defining equations} 
\label{sub:symmetric_functions_and_standard_defining_equations}
We will now recall some standard facts about analytic varieties from 
Whitney's book \cite{MR0387634} in the setting we need. 

If $(X,0)$ is a Segre nondegenerate germ of multiplicity $k$, 
$\tilde\Delta_z \times \tilde\Delta_\xi$ 
is $\pi_2$-good for $X$, and $\mathcal{X}$ is the corresponding complexification,
then one can consider the $k$ (generically distinct) points 
$\alpha^1 (\xi) , \dots , \alpha^k (\xi) \in \tilde\Delta_z$, defined for $\xi\in \tilde\Delta_\xi$, satisfying
$(\alpha^j (\xi), \xi) \in \mathcal{X}$ for $j=1,\dots, k$ as 
a point $Z(\xi) = \inp{\alpha^1 (\xi), \dots, \alpha^k (\xi)} \in (\Cn)^k_{\rm sym}$.

Here $X^k_{\rm sym}$ is the $k$-th symmetric power of $X$, i.e. the 
quotient of $X^k$ with respect to the equivalence relation identifying 
two points $(\alpha^1, \dots , \alpha^k)$ with $(\beta^1, \dots, \beta^k)$
if $\beta^j = \alpha^{\ell_j}$ for some permutation $j \mapsto \ell_j$ of 
$\left\{ 1,\dots, k \right\}$. We shall write $\alpha \in \inp{\alpha^1, \dots , \alpha^k}$
if $\alpha = \alpha^j$ for some $j$. 

The variety $\mathcal{X}$ can be considered as  the {\em multigraph} of the 
holomorphic {\em multifunction} $Z(\xi)$ in $\Delta_z \times \Delta_\xi$, meaning
that for any symmetric function 
$h\colon (\Cn)^k \to \C$ the  composition
\[ h(Z(\xi)) = h(\alpha^1 (\xi), \dots, \alpha^k(\xi)) 
\in \holmaps (\Delta_\xi) \]
is holomorphic in $\Delta_\xi$.

The elementary symmetric functions on $(\Cn)^k$ are the 
coefficients $\phi_{\ell,\gamma}$ of the polynomial 
\[ P_{n,k} (x,u) = \prod_{j=1}^k (x - u \cdot \alpha^j)  = \sum_{\ell=0}^k \left( \sum_{\substack{\gamma\in \N^n\\ |\gamma| = \ell}} \phi_{\ell,\gamma} (\alpha^1, \dots, \alpha^k) u^\gamma \right) x^{k-\ell} . \]
In terms of the elementary symmetric functions 
$P_{\ell,k} \in \C[y^1, \dots y^k]$, 
where the polynomial $P_{\ell,k} (y^1, \dots ,y^k)$ is defined by
\[ \prod_{j=1}^k (x-y^j) = \sum_{\ell = 0}^k P_{\ell,k} (y^1, \dots, y^k) x^{k-\ell}, \]
they can also be expressed through the coefficients of the
 polynomials
\[P_{\ell,k} (u\cdot \alpha^1, \dots, u\cdot \alpha^k) = \sum_{\substack{\gamma\in \N^n\\ |\gamma| = \ell}} \phi_{\ell,\gamma} (\alpha^1, \dots, \alpha^k) u^\gamma\] 
In order to transfer results 
for symmetric functions of $k$ scalar variables
to the multivariate case, note that for any $u\in\Cn$, and 
any  symmetric polynomial $P(y^1, \dots , y^k)$ 
we have that $P (u \cdot \alpha^1, \dots , u\cdot \alpha^k)$
is a symmetric function of $(\alpha^1, \dots ,\alpha^k) \in (\Cn)^k$.
In particular, if we recall that the elementary symmetric polynomials 
$P_{\ell, k}$
of the $k$ variables $y^1, \dots, y^k$ uniquely identify
 $\inp{y^1, \dots, y^k} \in (\C)^k_{\rm sym}$, just as the power sums 
\[ S_{\ell, k} = \sum_{j=0}^k (y_j)^\ell \]
do, we have the following Lemma.

\begin{lemma}\label{lem:powers}
A point $\alpha = \inp{\alpha^1, \dots ,\alpha^k} \in (\Cn)^k_{\rm sym} $ 
is uniquely determined by each of the following: 
\begin{compactenum}[\rm i)]
\item $\varphi_{\ell, \gamma} (\alpha^1, \dots , \alpha^k)$ for $0\leq \ell \leq k$, $|\gamma|=\ell$;
\item $S_\beta(\alpha^1, \dots , \alpha^k) = \sum_{j=1}^k (\alpha^j)^\beta$ $0\leq |\beta|\leq \ell$.
\end{compactenum}
\end{lemma}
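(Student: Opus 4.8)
The plan is to reduce both statements to the scalar case recalled just before the lemma---namely that the elementary symmetric polynomials $P_{\ell,k}$ (equivalently, by Newton's identities in characteristic zero, the power sums $S_{\ell,k}$ for $1 \le \ell \le k$) determine a point of $\C^k_{\rm sym}$---by testing against every linear functional $u \in \Cn$. First I would observe that for each fixed $u$ the data in (i) recovers the value at the scalars $u\cdot\alpha^1, \dots, u\cdot\alpha^k$ of the $\ell$-th elementary symmetric polynomial, since by definition
\[ P_{\ell,k}(u\cdot\alpha^1, \dots, u\cdot\alpha^k) = \sum_{\substack{\gamma\in\N^n\\ |\gamma|=\ell}} \varphi_{\ell,\gamma}(\alpha^1,\dots,\alpha^k)\, u^\gamma , \]
and these for $\ell = 1, \dots, k$ determine $\inp{u\cdot\alpha^1, \dots, u\cdot\alpha^k} \in \C^k_{\rm sym}$ by the scalar case. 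Likewise the data in (ii) recovers the scalar power sums through the multinomial expansion
\[ \sum_{j=1}^k (u\cdot\alpha^j)^\ell = \sum_{\substack{\beta\in\N^n\\ |\beta|=\ell}} \binom{\ell}{\beta}\, u^\beta\, S_\beta(\alpha^1,\dots,\alpha^k) , \]
so that the $S_\beta$ with $|\beta| \le k$ determine $\sum_j (u\cdot\alpha^j)^\ell$ for $1 \le \ell \le k$, and again the scalar case applies. Thus in both cases the given data determine, for every $u \in \Cn$, the unordered multiset $\inp{u\cdot\alpha^1, \dots, u\cdot\alpha^k}$; this is exactly where the bound $\ell \le k$ (resp. $|\beta| \le k$) is needed, since Newton's identities require the scalar power sums only up to order $k$.

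The main step---and the one I expect to be the real obstacle---is then to show that the family of scalar projections $u \mapsto \inp{u\cdot\alpha^1, \dots, u\cdot\alpha^k}$, $u \in \Cn$, already determines the vector-valued symmetric point $\inp{\alpha^1, \dots, \alpha^k} \in (\Cn)^k_{\rm sym}$ itself. Suppose $\inp{\alpha^j}$ and $\inp{\beta^j}$ yield the same family, so that
\[ \prod_{j=1}^k (x - u\cdot\alpha^j) = \prod_{j=1}^k (x - u\cdot\beta^j) \]
as an identity in $(x,u)$. I would choose $u_0 \in \Cn$ generic enough to separate the finitely many distinct points occurring among $\alpha^1,\dots,\alpha^k,\beta^1,\dots,\beta^k$, i.e.\ so that $u_0\cdot p \ne u_0\cdot q$ whenever $p \ne q$ are two such points; this excludes only finitely many hyperplanes. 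For such $u_0$ the distinct values of $u_0\cdot\alpha^j$ correspond bijectively, with matching multiplicities, to the distinct points among the $\alpha^j$, and likewise for the $\beta^j$; comparing the two multisets of scalars pairs each distinct $\alpha$-point $p$ with a unique distinct $\beta$-point $q$ of the same multiplicity and with $u_0\cdot p = u_0\cdot q$. To upgrade this to $p = q$ I would perturb $u_0$ to $u_0 + tv$ for an arbitrary $v \in \Cn$ and small $t$: near the isolated value $u_0\cdot p$ the roots of either product are $u_0\cdot p + t\,(v\cdot p)$ and $u_0\cdot q + t\,(v\cdot q)$ with the same multiplicity, so equality of the polynomials forces $v\cdot p = v\cdot q$ for every $v$, hence $p = q$. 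Matching the multiplicities then gives $\inp{\alpha^j} = \inp{\beta^j}$, completing the reduction.

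Finally, the reverse implication is immediate: each $\varphi_{\ell,\gamma}$ and each $S_\beta$ is by construction a symmetric function of $\alpha^1, \dots, \alpha^k$, hence a well-defined function of $\inp{\alpha^1, \dots, \alpha^k}$, so the point trivially determines the listed quantities. I expect the technical care to be concentrated in the genericity-and-perturbation argument of the second paragraph---handling repeated points and multiplicities cleanly---whereas the reduction of the first paragraph is just the bookkeeping of expanding $(u\cdot\alpha^j)^\ell$ and invoking the one-variable Newton identities. An alternative to the perturbation argument would be to cite the classical theory of multisymmetric functions, by which the power sums $S_\beta$ with $1 \le |\beta| \le k$ already generate, and hence separate the points of, $(\Cn)^k_{\rm sym}$; but the elementary projection-and-perturbation route keeps the proof self-contained and in line with the scalar facts recalled above.
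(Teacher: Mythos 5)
Your proof is correct and follows essentially the same route as the paper's: both parts are reduced to the scalar case by testing against every linear functional $u \in \Cn$, recovering the elementary symmetric functions (for (i)) resp.\ the power sums (for (ii), via the multinomial expansion) of $u\cdot\alpha^1, \dots, u\cdot\alpha^k$, and hence the multiset $\inp{u\cdot\alpha^1, \dots, u\cdot\alpha^k}$ for every $u$. The only difference is that the paper asserts the final step---that knowing $\inp{u\cdot\alpha^1, \dots, u\cdot\alpha^k}$ for all $u$ determines $\inp{\alpha^1, \dots, \alpha^k}$---without proof, whereas you supply a correct genericity-and-perturbation argument for it (which could be shortened by noting that $\prod_{j}(x - u\cdot\alpha^j) = \prod_{j}(x - u\cdot\beta^j)$ as polynomials in $(x,u)$ forces equality of the multisets of linear factors by unique factorization in $\C[x,u_1,\dots,u_n]$).
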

\begin{proof}
Let us start with i). If we know the $\varphi_{\ell,\gamma}$ for 
$|\gamma|=\ell$, $0\leq \ell \leq k$, we know $P_{\ell,k} (u\cdot \alpha^1 , \dots , u\cdot \alpha^k)$ for $0 \leq \ell \leq k$ and every $u\in \Cn$.
 Hence, we know 
 $\inp{u\cdot \alpha^1 , \dots , u\cdot \alpha^k} \in (\C)^k_{\rm sym}$ 
 for every $u\in \Cn$, and thus, 
 $\inp{\alpha^1, \dots ,\alpha^k} \in (\Cn)^k_{\rm sym}$. 

 For ii), if we know $S_\beta(\alpha^1,\dots ,\alpha^k)$ for $|\beta|\leq k$,
  we know $S_{\ell,k} (u\cdot \alpha^1, \dots ,u\cdot \alpha^k)$ for
  $0\leq \ell \leq k$ and every $u\in\Cn$, and as before, this 
  means we know 
 $\inp{u\cdot \alpha^1 , \dots , u\cdot \alpha^k} \in (\C)^k_{\rm sym}$ 
 for every $u\in \Cn$, and therefore also 
 $\inp{\alpha^1, \dots ,\alpha^k} \in (\Cn)^k_{\rm sym}$. 
\end{proof}
		
If we look at the polynomial
\[ P_{n,k} (u\cdot z, u) = \sum_{|\gamma| = m} \Phi_{\gamma} (z,\xi) u^\gamma, \]
the $\Phi_\gamma$ are polynomials in $z$ with coefficients
which are  holomorphic in $\xi \in \Delta_\xi$. The equations 
\begin{equation}\label{e:standard} \Phi_\gamma (z,\xi) = 0, \,  |\gamma|=k  
\end{equation}
define $\mathcal{X}$ in $\Delta_z \times \Delta_\xi$ and
will be referred to as 
the {\em standard defining equations} of $\mathcal{X}$. Note that 
\begin{equation}\label{e:coefficientsstandard} \Phi_\gamma (z,\xi) = \frac{k!}{\gamma!} z^\gamma + \sum_{\beta < \gamma} a_\beta^\gamma (\xi) z^\beta. \end{equation}

\begin{example}
The Bishop surface $w = \lambda (z^2 + \bar z^2) + |z|^2$ is 
a germ of a  Segre-nondegenerate hypersurface in $\C^2$ of multiplicity $2$
if $\lambda\neq 0$.
 Its 
standard defining equations are given by 
\[ \begin{aligned}
\Phi_{(2,0)} (z,w, \bar z, \bar w) &= \left(-\frac{\bar w}{\lambda }+\frac{|z |^2}{\lambda
   }+\bar z^2+z^2\right), \\ \Phi_{(1,1)} (z,w, \bar z, \bar w) &= \frac{ \left(w-\bar w\right) \left(\bar z+2 \lambda 
   z\right)}{\lambda }, \\ \Phi_{(0,2)} (z,w, \bar z, \bar w) &= \left(w-\bar w\right)^2.
\end{aligned}
\]
\end{example}

We finally note that by Proposition~\ref{prop:reflection} we can 
also consider the ``barred'' defining equations 
(in a possibly smaller polydisc), where $z$ and $\xi$ change their 
roles, that is, the functions 
\begin{equation}\label{e:barredstandard}\overline{\Phi_\gamma} (\xi ,z) , \quad |\gamma| = k,\end{equation}
are also (standard) defining equations for $\mathcal{X}$ (considered 
as the complexification of $(X,0)$ in a possibly smaller neighbourhood of the 
origin). 

\subsection{The Segre varieties as multifunctions} 
\label{sub:the_segre_sets_as_multifunctions}
Given $(X,0)\subset \Cn$, 
the Segre variety $S_q$ of a point $\bar q\in {\tilde\Delta_\xi}$ with 
respect to the $\pi_2$-good polydiscs $\tilde\Delta_z \times \tilde\Delta_\xi$ is usually defined by  
\[ S_q = \left\{ z \in \tilde\Delta_z \colon (z,\bar q) \in \mathcal{X} \right\}, \]
where $\mathcal{X}$ is, as before, the complexification of $(X,0)$ 
with respect to $\tilde\Delta_z \times \tilde\Delta_\xi$.
For generic $q$, we have that $|S_q| = k$. 
If we would like the  
map $q \mapsto S_q$ to 
be antiholomorphic, we need 
to identify it with the holomorphic 
multifunction $Z \colon \tilde\Delta_\xi \to (\tilde\Delta_z)^k_{\rm sym}$
defined by 
\begin{equation}
	\label{e:multi1}  Z(\xi) = \inp{z^1 (\xi), \dots ,z^k(\xi)}, \quad \xi\in \tilde\Delta_\xi,
\end{equation}
where $Z(\xi) = (\pi_2|_{\mathcal{X}})^{-1} (\xi)$ via $S_q = Z(\bar q)$. We can  equivalently
consider $\Xi \colon \Delta_z \to (\Delta_\xi)^k_{\rm sym}$,
\begin{equation}
	\label{e:multi2}  \Xi (z) = \inp{\xi^1 (z) , \dots , \xi^k (z)}, \quad z\in \Delta_z, 
\end{equation}
where $\Xi(z) = (\pi_1|_{\mathcal{X}})^{-1} (z)$
for suitable polydiscs $\Delta_z \times \Delta_\xi$; 
by Proposition~\ref{prop:reflection}, these multifunctions (as germs at $0$) are
related by $Z = \overline{ \Xi}$ (or equivalently $\Xi = \overline{Z}$, where we define
the barred multifunction by $\overline{Z (\xi)} = \overline{Z} (\overline{\xi})$.

By Proposition~\ref{prop:reflection}, 
these two maps have additional properties which
are usual for 
Segre varieties. We combine this
with the basic invariance result for 
holomorphic maps: note that if 
$H\colon (\Cn,0) \to (\C^{n'},0)$ 
is a germ of a holomorphic map satisfying
$H((X,0)) \subset (Y,0)$, then the map
$\mathcal{H} (z,\xi) = (H(z) ,\bar H(\xi))$
satisfies $\mathcal{H} ((\mathcal{X},0) \subset (\mathcal{Y},0)$, where $\mathcal{Y}$
denotes the complexification of $(Y,0)$. 

\begin{lemma}
\label{lem:propertiessegre} Let $(X,0)\subset (\Cn,0)$, $(Y,0) \subset (\C^{n'},0)$
be germs of  Segre-nondegenerate 
real-analytic varieties at the origin,  $\tilde\Delta_z \times \tilde\Delta_\xi$ be 
$\pi_2$-good for $(X,0)$, and $\tilde\Delta_{z'} \times \tilde\Delta_{\xi'}$ be $\pi_2'$-good for $(Y,0)$. Also,
let $H\colon (\Cn,0) \to (\C^{n'},0)$ 
be a germ of a holomorphic map satisfying
$H((X,0)) \subset (Y,0)$. Let $Z$ and $Z'$
be defined as above. 
Then the following hold
(provided all of the expressions involved are defined, and the good polydiscs are small enough): 
\begin{compactenum}[\rm i)]
\item $p \in Z(\bar q)$ if and only if $q \in Z(\bar p)$.
\item $z \in Z(\bar z)$ if and only if 
$z\in (X,0)$.
\item $H(p) \in Z'(\overline{H(q)})$ if $p\in Z(\bar q)$.
\end{compactenum}
\end{lemma}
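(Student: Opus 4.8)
The plan is to reduce all three assertions to incidence statements for the complexifications $\mathcal{X}$ and $\mathcal{Y}$ and then to invoke, in turn, the definition of the complexification, Proposition~\ref{prop:reflection}, and the invariance property of $\mathcal{H}$ recorded just above. The device used throughout is the dictionary that $p \in Z(\bar q)$ if and only if $(p,\bar q) \in \mathcal{X}$: indeed $Z(\xi) = (\pi_2|_{\mathcal{X}})^{-1}(\xi)$, so $Z(\bar q) = S_q$ consists exactly of those $p$ with $(p,\bar q) \in \mathcal{X}$, and the analogous dictionary holds for $Z'$ and $\mathcal{Y}$. First I would fix the good polydiscs and shrink them so that all the points appearing below lie in the domains where these fibers are defined; this is the meaning of the caveat ``provided all of the expressions involved are defined'' in the statement.

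For i), I would observe that by Proposition~\ref{prop:reflection} the reflection $S(z,\xi) = (\bar\xi, \bar z)$ leaves $\mathcal{X} \cap (\Delta \times \Delta^*)$ invariant. Applying it to $(p,\bar q)$ gives $S(p,\bar q) = (q,\bar p)$, so $(p,\bar q) \in \mathcal{X}$ exactly when $(q,\bar p) \in \mathcal{X}$; by the dictionary this is precisely the equivalence $p \in Z(\bar q) \Leftrightarrow q \in Z(\bar p)$. For ii), the dictionary gives that $z \in Z(\bar z)$ means $(z,\bar z) \in \mathcal{X}$, and by the definition of the complexification (together with the diagonal identification established in Proposition~\ref{prop:reflection}) the intersection of $\mathcal{X}$ with $\{z = \bar\xi\}$ is, under $z \mapsto (z,\bar z)$, the germ $(X,0)$ itself; hence $(z,\bar z) \in \mathcal{X}$ if and only if $z \in (X,0)$.

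For iii), I would use the invariance property stated just before the lemma: since $H((X,0)) \subset (Y,0)$, the map $\mathcal{H}(z,\xi) = (H(z), \bar H(\xi))$ sends $(\mathcal{X},0)$ into $(\mathcal{Y},0)$. If $p \in Z(\bar q)$, then $(p,\bar q) \in \mathcal{X}$, so
\[ \mathcal{H}(p,\bar q) = \bigl( H(p), \bar H(\bar q) \bigr) \in \mathcal{Y}. \]
Since $\bar H$ denotes the map obtained by conjugating the Taylor coefficients of $H$, one has $\bar H(\bar q) = \overline{H(q)}$, whence $\bigl(H(p), \overline{H(q)}\bigr) \in \mathcal{Y}$, which by the dictionary for $Z'$ reads $H(p) \in Z'(\overline{H(q)})$.

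The genuine content is thus carried entirely by the three ingredients already in place — the reflection symmetry of Proposition~\ref{prop:reflection}, the defining property of the complexification, and the invariance of $\mathcal{H}$ — so the lemma itself is a matter of unwinding the multifunction notation. The only real obstacle is the bookkeeping: I must track the conjugations carefully (in particular justify the identity $\bar H(\bar q) = \overline{H(q)}$ from the convention for $\bar H$) and verify that the good polydiscs for $(X,0)$ and $(Y,0)$ have been shrunk enough that $H$ maps the relevant $z$-polydisc into the $z'$-polydisc, with the corresponding statement for the conjugate factors, so that all the fibers and compositions written above are actually defined.
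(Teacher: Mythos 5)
Your proof is correct and follows essentially the same route as the paper's: the dictionary $p \in Z(\bar q) \Leftrightarrow (p,\bar q) \in \mathcal{X}$, the reflection invariance of Proposition~\ref{prop:reflection} for i) and ii), and the invariance of $\mathcal{H}$ for iii). You merely spell out the steps (the diagonal identification in ii) and the conjugation bookkeeping $\bar H(\bar q) = \overline{H(q)}$ in iii)) that the paper leaves implicit.
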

\begin{proof}
The content of the Lemma are
just convenient restatements of the 
fact that $Z(\xi) = \pi_z^{-1} (\xi)$ 
(and similarly for $Z'$). So in 
order to prove i), $p\in Z(\bar q)$ 
means that $(p,\bar q) \in \mathcal{X}$,
which in turn means that $(q, \bar p) \in \mathcal{X}$, i.e. $q\in Z(\bar p)$ (provided
the latter is defined). ii) follows similarily. 

For iii), we appeal to the 
fact that $\mathcal{H} ((\mathcal{X},0)) \subset (\mathcal{Y},0)$, which means that 
$p\in Z(\bar q)$ implies $(H(p),\overline{H(q)}) \in \mathcal{Y}$, i.e. $H(p) \in Z'(\overline{H(q)}) $.
\end{proof}

\subsection{Real-analytic and formal functions on $(X,0)$} 
\label{sub:real_analytic_and_formal_functions_on_}
A function $f \colon X \cap U \to \C$ is said to be real-analytic 
on $X\cap U$ if there exists a neighbourhood $\tilde U$ of $X$ in $\C^{n}$
and a real-analytic function $\tilde f \colon \tilde U \to \C$ on $\C^{n}$
such that ${\tilde{f}}|_{X\cap U} = f$. The set $\diffable{\omega} (X,0) $
of germs of real-analytic 
functions on $X$ at $0$ is therefore naturally identified with the local
function ring of $X$ at $0$, which we are going to denote by 
\[ \cps{X} = \faktor{\cps{z,\bar z}}{\idealsheaf_0 (X)}. \]
We also define the ring of formal functions of $X$ by 
\[ \fps{X} = \faktor{\fps{z,\bar z}}{\hat{\idealsheaf}_0 (X)}, \]
where $\hat{\idealsheaf}_0 (X) = \fps{z,\bar z} \idealsheaf_0 (X)$ is 
the ideal generated by $\idealsheaf_0 (X) \subset \fps{z,\bar z}$ in 
the ring of germs of formal power series. Note that $\hat{\idealsheaf}_0 (X)$
is again a real ideal. The corresponding rings in the complexification 
are 
\[ \cps{\mathcal{X}} =\faktor{\cps{z,\xi}}{\idealsheaf_0 (\mathcal{X})}, 
\quad \fps{\mathcal{X}} = \faktor{\fps{z,\bar z}}{\hat{\idealsheaf}_0 (\mathcal{X})}.\]
Note that the rings $\cps{\mathcal{X}}$ and $\cps{X}$ as well as 
$\fps{\mathcal{X}}$ and $\fps{X}$ are isomorphic; however, it is convenient
to distinguish between $X$ and its complexification $\mathcal{X}$.

An important remark, echoing Proposition~\ref{p:genericallytotallyreal} 
in the formal setting, is that the natural maps 
\[ \fps{z} \to \fps{X}, \quad \fps{z} \to\fps{\mathcal{X}},\quad \fps{\bar z} \to \fps{X}, \quad\fps{\xi} \to \fps{X},    \]
are all {\em injections}:
\begin{lemma}\label{lem:formalsubset}
If $(X,0)$ is a Segre-nondegenerate germ, then there 
are natural inclusions
\[\fps{z} \subset \fps{X}, \quad \fps{z} \subset\fps{\mathcal{X}},\quad \fps{\bar z} \subset \fps{X}, \quad\fps{\xi} \subset \fps{X}.\]
\end{lemma}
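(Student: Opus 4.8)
The plan is to reduce all four inclusions to two statements about the complexification and then settle those by a dimension count. The substitution $\bar z \mapsto \xi$ induces a ring isomorphism $\fps{z,\bar z} \to \fps{z,\xi}$ carrying $\hat{\idealsheaf}_0(X)$ onto $\hat{\idealsheaf}_0(\mathcal{X})$, by the very definition of $\idealsheaf_0(\mathcal{X})$, so it descends to the isomorphism $\fps{X} \cong \fps{\mathcal{X}}$ already noted, under which $\fps{z}$ corresponds to $\fps{z}$ and $\fps{\bar z}$ to $\fps{\xi}$. Thus it suffices to show that $\fps{z} \to \fps{\mathcal{X}}$ and $\fps{\xi} \to \fps{\mathcal{X}}$ are injective; equivalently, that $\fps{z} \cap \hat{\idealsheaf}_0(\mathcal{X}) = 0$ and $\fps{\xi} \cap \hat{\idealsheaf}_0(\mathcal{X}) = 0$. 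All four listed inclusions then follow by transporting along $\fps{\mathcal{X}} \cong \fps{X}$.

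First I would establish that $\fps{\mathcal{X}}$ is a finite module over each of $\fps{z}$ and $\fps{\xi}$. For $\fps{\xi}$ this is immediate from the standard defining equations \eqref{e:standard}: by \eqref{e:coefficientsstandard} each $\Phi_\gamma$ with $|\gamma| = k$ is monic in $z$ with leading monomial $z^\gamma$ and coefficients in $\fps{\xi}$, so modulo $\hat{\idealsheaf}_0(\mathcal{X})$ every $z^\gamma$ with $|\gamma| = k$ is congruent to a combination of strictly lower $z$-monomials with coefficients in $\fps{\xi}$. A downward induction on total $z$-degree then shows that $\fps{\mathcal{X}}$ is generated over $\fps{\xi}$ by the finitely many monomials $z^\beta$ with $|\beta| < k$. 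The barred defining equations \eqref{e:barredstandard}, which are monic in $\xi$ with coefficients in $\fps{z}$, give in exactly the same way that $\fps{\mathcal{X}}$ is finite over $\fps{z}$.

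Next I would pin down the dimension. The variety $\mathcal{X}$ is irreducible of pure dimension $n$ at the origin, so its analytic local ring $\cps{\mathcal{X}}$ has Krull dimension $n$; since $\fps{\mathcal{X}}$ is the $\mathfrak{m}$-adic completion of $\cps{\mathcal{X}}$ and completion preserves the dimension of a Noetherian local ring, $\dim \fps{\mathcal{X}} = n$. Now consider the finite homomorphism $A \to \fps{\mathcal{X}}$ with $A = \fps{z}$, and likewise with $A = \fps{\xi}$. Because $\fps{\mathcal{X}}$ is integral over the image $A/\ker$, we have $\dim(A/\ker) = \dim \fps{\mathcal{X}} = n = \dim A$. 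But $A$ is a regular local domain of dimension $n$, so every nonzero ideal $I$ satisfies $\dim(A/I) \le n-1$; hence $\ker = 0$ and the map is injective. This gives $\fps{z} \subset \fps{\mathcal{X}}$ and $\fps{\xi} \subset \fps{\mathcal{X}}$, and the isomorphism $\fps{\mathcal{X}} \cong \fps{X}$ yields all four asserted inclusions.

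The step I expect to be the main obstacle is the passage from the convergent to the formal setting, concentrated in the two facts $\dim \fps{\mathcal{X}} = n$ and the module-finiteness of $\fps{\mathcal{X}}$ over $\fps{z}$ and over $\fps{\xi}$. The finiteness is handled cleanly by the monic structure of the (barred) standard defining equations, which hold verbatim in the formal ring, while the dimension equality rests on the standard fact that completion preserves Krull dimension. Once these are in place the injectivity is a purely formal consequence of the principle that a finite extension of equal dimension over a regular local domain is injective, and no further analytic input is needed. Alternatively, one could first prove the convergent inclusions $\cps{z} \subset \cps{\mathcal{X}}$ and $\cps{\xi} \subset \cps{\mathcal{X}}$ directly from the surjectivity of $\pi_1$ and $\pi_2$ recorded in Proposition~\ref{p:genericallytotallyreal}, and then tensor with the faithfully flat completions $\fps{z}$, resp.\ $\fps{\xi}$, to recover the formal statement.
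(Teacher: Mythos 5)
Your proof is correct, but it follows a genuinely different route from the paper's. The paper works directly on $X$: it writes a purported relation $\varphi(\bar z)=\sum_j\varphi_j(z,\bar z)\varrho^j(z,\bar z)$ in terms of generators of $\hat{\idealsheaf}_0(X)$, observes that Segre nondegeneracy makes $z\mapsto(\varrho^1(z,0),\dots,\varrho^d(z,0))$ finite so that $\dopt{\varrho}{z}(z,\bar z)$ has generically full rank, and then shows, by differentiating in $z$ and inducting on the order of $\bar z$-derivatives, that all $\varphi_{j,\bar z^\alpha}(z,0)$ vanish, whence $\varphi=0$; no dimension theory enters. You instead transport everything to the complexification via the substitution isomorphism $\fps{X}\cong\fps{\mathcal{X}}$ (which correctly reduces all four inclusions to injectivity of $\fps{z}\to\fps{\mathcal{X}}$ and $\fps{\xi}\to\fps{\mathcal{X}}$), obtain module-finiteness of $\fps{\mathcal{X}}$ over both subrings from the monic shape \eqref{e:coefficientsstandard} of the standard defining equations \eqref{e:standard} and their barred versions \eqref{e:barredstandard}, and finish with commutative algebra: completion preserves Krull dimension, finite injective extensions preserve it, and a nonzero ideal in the $n$-dimensional regular domain $\fps{z}$ would force the quotient's dimension below $n$. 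This is sound. What your route buys: it reuses machinery the paper needs anyway (your finiteness step is essentially the content of Proposition~\ref{p:polynomialrepresentation}, whose operator $S$ also supplies the rigorous substitute for your slightly informal ``downward induction on total $z$-degree'' --- for a genuine power series one must note that each rewriting raises the $\xi$-order, so the process converges formally), and it sidesteps the paper's delicate genericity-plus-differentiation step, which requires care because representations $\sum_j\varphi_j\varrho^j$ are not unique. What it costs: the paper asserts its argument carries over verbatim to formal Segre-nondegenerate varieties (\S\ref{sub:remark_formal_manifolds}), whereas yours leans on the geometric complexification --- the existence of convergent standard defining equations and the identification $\dim\cps{\mathcal{X}}=n$ --- so it does not immediately apply in the purely formal setting.
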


\begin{proof}
We only prove this for $\fps{\bar z}\subset \fps{X}$, the other assertion being either direct consequences or analogous to that. 
Since $X$ is Segre-nondegenerate, if we choose a set of generators
$\varrho^1,\dots , \varrho^d $ of $\idealsheaf_0 (X)$, the map 
$\Cn \ni z \mapsto H(z) = \left( \varrho^1 (z,0),\dots , \varrho^d(z,0) \right) \in \C^d $
is finite. Therefore, the matrix 
$ \dopt{H}{z} (z) $ is generically of full rank; hence 
the matrix $\dopt{\varrho}{z} (z,\bar z)$ is also. If now a function 
of the form $\varphi(\bar z)$ is congruent to $0$ mod $\idealsheaf_0 (X)$, 
we can write 
\[  \varphi(\bar z) = \sum_{j=1}^d \varphi_j (z,\bar z) \varrho^j (z,\bar z).  \]
Taking a derivative with respect to $z$ yields
\[ 0 = \sum_j \varphi_j (z,\bar z) \varrho^j_z (z,\bar z)  + \sum_j \varphi_{j,z} (z,\bar z) \varrho^j (z,\bar z)
, \]
and so (since the $\varrho^j_z (z,0)$ are generically independent)
we have that $\varphi_j(z,0) = 0$.

Now assume that we know that $\varphi_{j,\bar z^\alpha} (z,0) =0$ for 
$|\alpha| < k$. We take  a $\beta\in \N^n$  with $|\beta| = k$ and compute
\[ \dopt{^{|\beta|} \varphi}{\bar z^\beta}  = \sum_j \varphi_{j,\bar z^\beta} \varrho^j + P(\varphi_{j,\bar z^\alpha} \colon |\alpha| <k). \]
Taking the derivative with respect to $z$ once again and evaluating at
$\bar z = 0$ yields $\varphi_{j,\bar z^\beta} (z,0) = 0$. By induction, 
we therefore have $\varphi_j (z,\bar z) = 0$, $j=1, \dots, d$, and 
hence $\varphi =0$.
\end{proof}

\subsection{Remark: Formal varieties} 
\label{sub:remark_formal_manifolds}
One can, instead of working with ideals coming from an actual 
manifold $X$, often also obtain results which are valid 
for {\em formal Segre-nondegenerate varieties}: 
\begin{defn}
\label{def:formalideal} A radical ideal
 $\hat{\mathcal{I}} \subset \fps{z,\bar z}$  is said to define 
 a formal Segre nondegenerate variety $\hat X$ (at $0$) if 
 \begin{compactenum}
 \item $\hat{\mathcal{I}}$ is real, i.e. $\sigma (\hat{\mathcal{I}}) \subset \hat{\mathcal{I}}$, and 
 \item  $S = \left\{ \varrho(z,0) \colon \varrho \in \hat I \right\}$ is 
 an ideal of definition, i.e. there exists a $k$ such that the maximal ideal $\hat{\mathfrak{m}} \subset \fps{z,\bar z}$ satisfies $\hat{\mathfrak{m}}^k \subset \hat{\mathcal{I}}$. 
 \end{compactenum}
 The Segre multiplicity of the formal variety $\hat X$ is 
 defined to be $\dim_\C \faktor{\fps{z}}{S}$.
\end{defn}

We will use the same notation for formal varieties $\hat X$ (defined 
by the ideal $\hat{\idealsheaf}_0 (\hat X)$) that
we introduced real-analytic varieties above.  In particular, we would like to point out that 
Lemma~\ref{lem:formalsubset} holds (with the same proof) 
in this setting: i.e. we have that 
\[ \fps{z} \subset \fps{\hat X}, \quad \fps{z} \subset\fps{\hat{\mathcal{X}}},\quad \fps{\bar z} \subset \fps{\hat{X}}, \quad\fps{\xi} \subset \fps{\hat{X}}.  \]

\section{Obstructions to holomorphicity} \label{sec:obstr}

It is well known that for a real-analytic 
maximally real submanifold $E$, every real-analytic function 
on $E$ is the restriction of a holomorphic function in a neighbourhood 
of $E$. In the presence of a singularity, this is not necessarily the 
case any longer, as the following 
simple  example shows. 

\begin{example}\label{exa:sillyexample}
Consider the variety $X$ in $\C^2_{(z,w)}$ defined by 
\[ z^3 = w^2 + \bar w^2. \] This 
variety is Segre-nondegenerate, of Segre multiplicity $6$, and 
one checks that the real-analytic function 
$\bar w|_X$ is not the restriction of any holomorphic function 
in $\C^2$ to $X$. Indeed, if it were, say $\bar w|_X = f(z,w)$, then
by Proposition~\ref{p:genericallytotallyreal} we would have 
$f(z,w)^2 = z^3 - w^2$ as germs at the origin, which is absurd.
\end{example}

However, the Segre multiplicity of $(X,0)$ gives a rough bound 
for how many nonholomorphic real-analytic functions there are. 

\begin{prop}\label{p:polynomialrepresentation}
Let $(\hat X,0) \subset (\C^n,0)$ be a formal
Segre nondegenerate
subvariety, of Segre multiplicity $k$.  
Then there exists an operator 
\[T\colon \fps{z,\bar z} \to \fps{z}[\bar z]\]
valued in the space of polynomials of degree at most $k$ in $\bar z$ 
such that for every formal power series $f \in \fps{z,\bar z} $, 
we have that
 $Tf(z,\bar z) \in  \fps{z}[\bar z]$ 
 is a representative for the class of $f$ in $\fps{\hat X}$.
 Furthermore, if $X$ is a real-analytic variety and
if $f\in \cps{X}$, then $Tf \in \cps{z,\bar z}$.
\end{prop}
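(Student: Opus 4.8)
The plan is to construct the operator $T$ by reducing an arbitrary power series modulo the ideal $\hat{\idealsheaf}_0(\hat X)$ using the standard defining equations as a rewriting system. Recall from \eqref{e:coefficientsstandard} that for each multiindex $\gamma$ with $|\gamma| = k$, the standard defining equation has the form
\[
\Phi_\gamma(z,\bar z) = \frac{k!}{\gamma!} \bar z^\gamma + \sum_{\beta < \gamma} a^\gamma_\beta(z)\, \bar z^\beta,
\]
where here I view the roles of $z$ and $\bar z$ interchanged relative to the excerpt, using the barred defining equations \eqref{e:barredstandard} so that the leading monomial is in $\bar z$ and the coefficients $a^\gamma_\beta$ lie in $\fps{z}$. (One must verify this normalization is legitimate; the Segre-nondegeneracy and Lemma~\ref{lem:formalsubset} guarantee that $\fps{z} \subset \fps{\hat X}$, which is what makes a reduction producing $\fps{z}$-coefficients coherent.) The key structural fact is that these equations let me express every monomial $\bar z^\gamma$ with $|\gamma| = k$ as an $\fps{z}$-linear combination of lower-order monomials $\bar z^\beta$ with $\beta < \gamma$, modulo the ideal.

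First I would set up the reduction: given $f \in \fps{z,\bar z}$, I repeatedly use the relations $\Phi_\gamma \equiv 0$ to rewrite any occurrence of a monomial $\bar z^\gamma$ of degree $\geq k$ in terms of monomials of strictly lower $\bar z$-degree, absorbing the resulting factors into the $\fps{z}$-coefficients. This produces a candidate $Tf \in \fps{z}[\bar z]$ of degree at most $k-1$ (or at most $k$; the precise bound follows from the degree bookkeeping) representing the class of $f$ in $\fps{\hat X}$. The main technical point is \emph{convergence of the formal reduction}: because each replacement step lowers the $\bar z$-degree of the term being processed while only raising the $z$-degree of its coefficient, the contributions to any fixed coefficient of $Tf$ come from finitely many steps, so the operator is well-defined as a formal power series in $z$ with polynomial dependence on $\bar z$. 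I would organize this via a degree filtration and induction on total order, arguing that the reduction stabilizes coefficient-by-coefficient.

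The hard part will be establishing well-definedness and $\fps{z}$-linearity of $T$ independently of the order in which reductions are applied, i.e. that the class of $Tf$ in $\fps{\hat X}$ genuinely equals that of $f$ and that $Tf$ is canonically determined. Here I expect the cleanest route is to observe that the images of $1, \bar z, \dots$ up to degree $k-1$ span $\fps{\hat X}$ as an $\fps{z}$-module (this is essentially the Weierstrass/finiteness content of Segre-nondegeneracy: $\fps{z,\bar z}/\hat{\idealsheaf}_0(\hat X)$ is a finite $\fps{z}$-module by the ideal-of-definition hypothesis $\hat{\mathfrak m}^k \subset S$), so that $\fps{\hat X}$ is a free or at least finitely generated $\fps{z}$-module with the monomials $\bar z^\beta$, $|\beta| < k$, as generators; $T$ is then the composition of the quotient map $\fps{z,\bar z} \to \fps{\hat X}$ with a fixed $\fps{z}$-linear section into $\fps{z}[\bar z]$, which makes both well-definedness and linearity automatic.

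Finally, for the convergence claim in the real-analytic case, I would argue that when $X$ is a genuine real-analytic variety and $f \in \cps{X}$, the coefficients $a^\gamma_\beta$ are convergent (the standard defining equations come from convergent data) and the reduction can be carried out with uniform control: since only finitely many generators and finitely many reduction steps per fixed degree are involved, and each step preserves convergence, the Cauchy estimates propagate to show $Tf \in \cps{z,\bar z}$. Concretely, I would bound the radii of convergence by tracking the geometric growth of coefficients through the (finitely many per degree) substitutions, invoking that $\cps{z}$ is Noetherian and that finite modules over $\cps{z}$ inherit convergence, so the $\fps{z}$-linear section itself maps convergent elements to convergent elements.
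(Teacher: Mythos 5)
Your formal argument is essentially the paper's own: the paper also reduces modulo the barred standard defining equations
$\bar\Phi_\gamma(\bar z,z)=\frac{k!}{\gamma!}\bar z^\gamma+\sum_{\beta<\gamma}a^\gamma_\beta(z)\bar z^\beta$,
organizing your iterated rewriting as the inversion of an operator $S$ whose error $I-S$ raises the order of vanishing; this works because the $a^\gamma_\beta$ vanish at $0$, which is also exactly what makes your ``finitely many contributions per output coefficient'' bookkeeping go through. The well-definedness you list as the hard part is a non-issue: the statement does not ask for a canonical or even linear $T$, and any fixed reduction strategy represents the class correctly, since each step changes the series by an element of $\hat{\idealsheaf}_0(\hat X)$.

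The genuine gap is in the convergent case, which is where the paper's proof does most of its work. Your first suggestion --- finitely many reduction steps per fixed degree, each step preserving convergence, so ``Cauchy estimates propagate'' --- does not suffice: degree-by-degree stabilization gives formal convergence only, with no common polydisc and no uniform bound over the infinitely many steps that must be summed. The paper supplies precisely this missing content: quantitative division estimates $\vnorm{T_\gamma\varphi}\leq(2/s)^{|\gamma|}\vnorm{\varphi}$ on polydiscs, together with the observation that $\vnorm{a^\gamma_\beta}$ is small on small polydiscs (they vanish at $0$), so that $I-S$ has operator norm less than $1$ on the Banach space $\holmaps^\infty(\Delta_r^n\times\Delta_s^n)$ and the Neumann series converges there. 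Your fallback --- that an $\fps{z}$-linear section of the finite module $\fps{\hat X}$ ``maps convergent elements to convergent elements'' --- is not a valid step: such a linear section need not exist at all ($\fps{\hat X}$ need not be free, hence not projective, over the local ring $\fps{z}$), and, more to the point, representatives in $\fps{z}[\bar z]$ of a given class are not unique, so a formally constructed choice of representatives can perfectly well assign a divergent representative to a convergent class. To rescue the module-theoretic route you would need the analytic Weierstrass finiteness theorem --- $\cps{z,\bar z}/\idealsheaf_0(X)$ is a finite $\cps{z}$-module generated by the $\bar z^\beta$ with $|\beta|<k$ --- which produces convergent representatives of convergent classes, and then define $T$ so as to select these; that is a legitimate, more abstract alternative to the paper's explicit estimates, but it is a substantial theorem that must be invoked, not a consequence of Noetherianity or of the formal finiteness you established.
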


\begin{proof} We use the standard defining equations introduced in \eqref{e:standard} above in their conjugate versions
in an adaptation of standard Weierstrass division. So let  
\[ \bar \Phi_\gamma (\bar z, z) = \frac{k!}{\gamma!} \bar z^\gamma + \sum_{\beta< \gamma} a_\beta^\gamma (z) \bar z^\beta \in \hat{\idealsheaf}_0 (\hat X), \]
where the $a_\beta^\gamma (z) \in \fps{z}$ vanish at $0$. For any 
formal power series $\varphi(z,\bar z)$, we write 
\[ \varphi(z,\bar z)= \varphi_0 (z,\bar z) + \sum_{|\gamma| =k} \bar z^\gamma T_\gamma \varphi(z,\bar z)  \]
with  $\varphi_0 (z,\bar z)$ a polynomial of degree at most $k$ in $\bar z$ and some choice of $T_\gamma$. Consider the operator $S\colon \fps{z,\bar z}\to \fps{z,\bar z}$ defined 
by 
\[ (S\varphi) (z,\bar z) = \varphi_0 (z,\bar z) +  
\sum_{|\gamma| =k} \frac{\gamma!}{k!} \bar \Phi_\gamma(\bar z, z) T_\gamma \varphi(z,\bar z).\]
Then $(I-S)\varphi$ vanishes to order strictly exceeding the order
 of vanishing of $\varphi$ at $0$. It follows that $S$ is bijective, 
 its inverse given by $S^{-1} f = \sum_j (I - S)^j f$. 
 Given a formal power series $f(z,\bar z)$, denote by $\varphi(z,\bar z) = (S^{-1} f)(z,\bar z)$. 
 We then have 
 \[ f(z,\bar z) = {(S\varphi)}(z,\bar z) = \varphi_0 (z,\bar z) +  
\sum_{|\gamma| =k} \frac{\gamma!}{k!} \bar \Phi_\gamma(\bar z, z) T_\gamma \varphi(z,\bar z).  \]
Therefore, $f(z,\bar z)$ and $\varphi_0$ (which is a polynomial of 
degree at most $z$) agree modulo the standard defining equations of $\hat X$.

For the proof of convergence, we need a little preparation, even though it
 is very similar. In that case, $X$ is given
by real-analytic equations, and hence the $a_\beta^\gamma (z)$ converge 
in a neighbourhood of the origin. We denote by $\Delta^p_r$ and $\Delta_s^q$ two 
polydiscs (of arbitrary polyradius $r$ and $s$, respectively) in 
$\C^p$ and $\C^q$, respectively. We first prove the following  

{\bf Claim:} For $p$, $q$, and $k\in\N$, there exist operators 
$T_\gamma\colon \holmaps^\infty (\Delta_r^p \times \Delta_s^q)$ for $\gamma\in \N^q$
 with $|\gamma| = k$ 
such that 
\[ \varphi(z,w) = \varphi_0 (z,w)  + \sum_{|\gamma| = k} w^\gamma T_{\gamma}\varphi (z,w) \]
where $\varphi_0 (z,w)$ is a polynomial of degree less than $k$ in $w$, 
and such that with $\vnorm{\cdot}_K$
denoting the supremum norm over a set $K$, 
\[ \vnorm{T_\gamma \varphi (z,w)}_{\Delta_r^p \times \Delta_s^q} \leq \left( \frac{2}{s} \right)^{\gamma}  \vnorm{\varphi(z,w)}_{\Delta_r^p \times \Delta_s^q}.  \]

The proof of this claim is by induction on $q$. 
For $n=1$, 
we write 
\[ \varphi (z,w_1 ) = \varphi(z,0) + {w_1} (T \varphi) (z,w), \]
where $T \varphi (z,w_1) = \frac{\varphi(z,w_1) - \varphi(z,0)}{w_1}$ clearly 
satisfies 
\[ \vnorm{T \varphi}_{\Delta_r \times \Delta_s} \leq \frac{2}{s_1} \vnorm{\varphi}_{\Delta_r \times \Delta_s}. \]
We can thus write 
 \[ \varphi (z , w_1) = \sum_{j=0}^{k-1} \varphi_j (z) w_1^j + w_1^k (T^k \varphi) (z,w_1),\]
 with 
 \[\vnorm{\varphi_j}_{\Delta_r^p \times \Delta_{s_1}} \leq \left( \frac{2}{s_1} \right)^j \vnorm{\varphi}_{\Delta_r^p \times \Delta_{s_1}}, \quad 
  \vnorm{ (T^k \varphi) (z,w_1)}_{\Delta_r^p } \leq \left( \frac{2}{s_1} \right)^k \vnorm{\varphi}_{\Delta_r^p \times \Delta_{s_1}}. \]

  Assuming that we know the corresponding estimates in dimension $q-1$, we write 
  $w =  (w',w_n) \in \C^{n-1} \times \C$ and (as before in the one variable 
  case) obtain 
  \[ \varphi (z, w) = \sum_{j=0}^{k-1} \varphi_j (z,w') w_n^j + w_n^k (T^k \varphi) (z,w), \]
  with 
  \[\vnorm{\varphi_j}_{\Delta_r^p \times \Delta_{s'}^{q-1}} \leq \left( \frac{2}{s_n} \right)^j \vnorm{\varphi}_{\Delta_r^p \times \Delta_s^q}, \quad 
  \vnorm{ (T^k \varphi) (z,w_1)}_{\Delta_r^p \times \Delta_s^q} \leq \left( \frac{2}{s} \right)^k \vnorm{\varphi}_{\Delta_r^p \times \Delta_s^q}. \]
  We thus define $T_{(0,\dots, 0, k)} := T^k$ and for $\ell < k$,
  \[ T_{(\gamma', \ell)} \varphi (z,w) = T_{\gamma'} \varphi_{\ell} (z,w') ,\]
  which satisfies all of our requirements. 

  With the claim, we can now set up the operator $S$ as in the formal part of 
  the proof, but as an operator between $\holmaps^\infty (\Delta_r^n \times \Delta_s^n)$, for 
  small $(r, \dots ,r)$ and $(s,\dots, s)$. When estimating $I-S$, one now obtains 
  that 
  \[ \vnorm{(I-S) \varphi}_{\Delta_r^n \times \Delta_s^n} \leq C \frac{\max_{\gamma,\beta} \vnorm{a_\gamma^\beta}_{\Delta_r^p}}{s^k},\]
  which for small $r$ is small, and thus, for such $r$ this operator is invertible (as an endomorphism
  of the Banach space $\holmaps^\infty (\Delta_r^n \times \Delta_s^n)$). The rest of 
  the proof is then exactly as before.
\end{proof}


Recall from \S~\ref{sub:remark_formal_manifolds}
that $\fps{z} \subset \fps{\hat X}$. The preceding Proposition says 
in particular 
that 
\[ \dim_\C \faktor{\fps{\hat X}}{\fps{z}} \leq \binom{n+k-1}{k-1}  \]
is finite.

 The next
theorem characterizes holomorphy of a germ $f(z,\bar z)$ in 
the sense that  a real-analytic function 
on a real-analytic Segre nondegenerate variety comes from the 
restriction of a holomorphic function if (and only if) its complexification is 
constant along the fibers of the projection on the first coordinate.

\begin{thm} \label{thm:constantonsegre}
Let $(X,0) \subset (\C^n,0)$ be an irreducible germ of 
a Segre nondegenerate $n$-dimensional
real-analytic subvariety.
Let $f\in \diffable{\omega} (X,0)$ be represented 
by $f(z,\bar z)\in \cps{z, \bar z}$, so that $f$ is holomorphic
on $\Delta_z \times \Delta_\xi$, which we assume to 
be $\pi_1$-good for $(X,0)$. Denote by 
$\sX \subset \Delta_z \times \Delta_\xi$
the corresponding complexification.
Then
there exists a holomorphic function $F \colon \Delta_z \to \C$
such that $f(z,\bar{z}) = F(z)$ for  $z \in X$ close 
by the origin
if and only if there exists an open set $U \subset \Delta_z$
such that
\begin{equation}
f(z,\xi_1) = f(z,\xi_2)
\end{equation}
whenever $z \in U$ and $(z,\xi_1) \in \sX$ and $(z,\xi_2) \in \sX$.
\end{thm}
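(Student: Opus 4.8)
The plan is to reduce both implications to the single claim that, for a given holomorphic $F$ on $\Delta_z$, one has $f(z,\bar z) = F(z)$ on $X$ if and only if $f(z,\xi) = F(z)$ on all of $\sX$. The backward direction of this claim is immediate by restricting to the diagonal $\xi = \bar z$ (recall $(z,\bar z) \in \sX$ for $z \in X$). For the forward direction I would introduce $h(z,\xi) = f(z,\xi) - F(z)$, holomorphic on $\Delta_z \times \Delta_\xi$ and hence on $\sX$. By Proposition~\ref{p:genericallytotallyreal}, near a generic regular point $z_0 \in X$ the set $\{(z,\bar z) : z \in X\}$ is a maximally totally real submanifold of the (there smooth, $n$-dimensional) variety $\sX$; since a holomorphic function vanishing on a maximally totally real submanifold of a complex manifold vanishes in a neighborhood, and $\sX$ is irreducible, $h \equiv 0$ on $\sX$. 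This yields $f(z,\xi) = F(z)$ everywhere on $\sX$.

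Granting the reduction, the \emph{only if} direction is immediate: if $f(z,\bar z) = F(z)$ on $X$, then $f(z,\xi) = F(z)$ on $\sX$, so for any $z$ and any two fibre points $(z,\xi_1), (z,\xi_2) \in \sX$ we get $f(z,\xi_1) = F(z) = f(z,\xi_2)$; this holds throughout $\Delta_z$, so any nonempty open $U$ witnesses the condition.

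For the \emph{if} direction, suppose $f(z,\cdot)$ is constant on the $\pi_1$-fibres over a nonempty open $U \subset \Delta_z$. Using the holomorphic multifunction $\Xi(z) = \inp{\xi^1(z), \dots, \xi^k(z)}$ of \eqref{e:multi2}, I would set
\[ F(z) = \frac{1}{k}\sum_{j=1}^k f\bigl(z, \xi^j(z)\bigr). \]
This $F$ is holomorphic on $\Delta_z$: writing $f(z,\xi) = \sum_\alpha c_\alpha(z)\,\xi^\alpha$ with $c_\alpha$ holomorphic, the sum equals $\sum_\alpha c_\alpha(z)\sum_{j}\bigl(\xi^j(z)\bigr)^\alpha$, and each power sum $\sum_j (\xi^j(z))^\alpha$ is a symmetric function of the branches, hence holomorphic in $z$ by the multigraph property recalled in \S\ref{sub:symmetric_functions_and_standard_defining_equations}; equivalently, the finite sum over branches is holomorphic off the discriminant and locally bounded, so it extends across the discriminant by Riemann's removable singularity theorem. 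On $U$ all $k$ summands coincide, so $F(z) = f(z,\xi^j(z))$ there, which means $h(z,\xi) = f(z,\xi) - F(z)$ vanishes on the nonempty open subset $\sX \cap \pi_1^{-1}(U)$ of the irreducible variety $\sX$; hence $h \equiv 0$ on $\sX$, and restricting to the diagonal gives $f(z,\bar z) = F(z)$ on $X$.

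The hard part is the holomorphicity of $F$ together with the two uniqueness steps on the possibly singular irreducible variety $\sX$: a holomorphic function on $\sX$ that vanishes on a maximally totally real submanifold (the forward reduction), respectively on a nonempty open subset (the \emph{if} direction), must vanish identically. Both rest on the irreducibility of $\sX$ and its description as the multigraph of $\Xi$; once these are in place, the rest of the argument is formal.
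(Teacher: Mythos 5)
Your proof is correct, but it runs along a genuinely different route than the paper's. The paper dismisses the ``only if'' direction as immediate (it follows directly from the definition of the complexification: $f(z,\bar z)-F(z)\in\idealsheaf_0(X)$ forces $f(z,\xi)-F(z)\in\idealsheaf_0(\sX)$), and for the ``if'' direction it first propagates fibre-constancy from $U$ to all of $\Delta_z\setminus D$ by analytically continuing the branch differences $f(z,\xi_1(z))-f(z,\xi_2(z))$ along paths avoiding the discriminant $D$; only then is $F$ defined, as the common fibre value, and extended across $D$ by the Riemann extension theorem. You instead define $F$ globally at the outset as the fibre average $\frac{1}{k}\sum_j f(z,\xi^j(z))$ --- precisely the operator $\averaging$ that the paper only introduces in \S\ref{sec:averaging} --- prove its holomorphy via the multigraph/symmetric-function property (or Riemann extension), and then replace the paper's monodromy argument by the identity theorem on the irreducible variety $\sX$: the function $f(z,\xi)-F(z)$ vanishes on the nonempty open set $\sX\cap\pi_1^{-1}(U)$, hence on all of $\sX$, and restriction to the diagonal finishes. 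Both arguments ultimately rest on irreducibility, i.e.\ connectedness of $\sX_{\rm reg}$. Yours trades the path-continuation step (which is slightly delicate, since the branches permute under continuation) for the averaging construction, is cleaner in that $F$ is globally defined and holomorphic from the start, and has the extra merit of treating ``only if'' explicitly, via the fact that a holomorphic function vanishing on the maximally totally real submanifold $\{(z,\bar z):z\in X\}$ of $\sX$ near a generic regular point (Proposition~\ref{p:genericallytotallyreal}, whose proof also gives smoothness of $\sX$ at such diagonal points) vanishes identically near that point. The paper's route is more elementary at this stage of the exposition, needing no symmetric-function machinery; yours anticipates, and is consistent with, the averaging operator used throughout the rest of the paper.
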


\begin{proof} We only need to prove the ``if'' part. 
As $\sX$ is irreducible, then the regular part $\sX_{\rm reg}$
is connected.
Suppose $\pi_1$ is generically $k$ to 1.
Let $V \subset \sX_{\rm reg}$ be the set where
$\pi_1$ is $k$ to 1, that is $V = \sX_{\rm reg} \setminus \pi_1^{-1}(D)$
where $D$ is the discriminant set of the projection, a complex analytic
subvariety.
Taking two preimages $\xi_1$ and $\xi_2$ as functions of $z$,
then for any $z_0 \in \Delta_z \setminus D$
we can analytically continue $f(z,\xi_1(z))-f(z,\xi_2(z))$
until we get to a $z \in U$.
As $U$ is an open set and
$f(z,\xi_1(z))-f(z,\xi_2(z))$ is identically zero on $U$, we find
that $f(z_0,\xi_1(z_0))-f(z_0,\xi_2(z_0))$.
Or in other words,
$f(z,\xi_1) = f(z,\xi_2)$ whenever $z \in \Delta_z \setminus D$
and $(z,\xi_1)$ and $(z,\xi_2)$ are in $\sX$.

Thus for all $z \in \Delta_z \setminus D$,
define $F(z) = f(z,\xi)$ for some $\xi$ such that $(z,\xi) \in \sX$.
Clearly $F$ is well defined by the above argument.
$F$ is locally bounded as $f$ is locally bounded in
all of $\Delta_z \times \Delta_\xi$.
Further $D$ is a subvariety of $\Delta_z$,
and so by the Riemann extension theorem, $F$ is a holomorphic function
on $\Delta_z$.
\end{proof}

In the next section we will introduce the ``right'' formulation of 
the preceding Theorem in order to be able to make it into a formal statement as 
well. 

\section{The averaging operator} \label{sec:averaging}

Let $(X,0) \subset (\C^n,0)$ be an irreducible germ at 0 of Segre-nondegenerate $n$-dimensional
subvariety, of Segre multiplicity $k$.
Let $\Delta_z \times \Delta_\xi$
be good for $(X,0)$, and let
$\sX \subset \Delta_z \times \Delta_\xi$
be the corresponding complexification.

%

In the previous
section we proved that a function on $X$ that complexifies to
$\Delta_z \times \Delta_\xi$ is a restriction of a holomorphic function if
and only if $\xi \mapsto f(z,\xi)$ is constant on $(\pi_1|_{\mathcal{X}})^{-1}(z)$.
We may therefore average out an arbitrary real analytic function
to obtain a holomorphic function as follows.



We recall the multifunctions $Z(\xi)$ and 
$\Xi (z)$, which are defined near the origin with 
values in $(\Delta_z)^k_{\rm sym}$ and $\tilde \Delta_\xi$, respectively, 
by \eqref{e:multi1} and \eqref{e:multi2}.
Given a germ of a real-analytic function $f \in \cps{z, \bar z}$, assume
that $f$ extends to be a holomorphic function on $\Delta\times \overline{\Delta}$, so that 
\begin{equation}
g(z,\inp{\omega^1,\ldots,\omega^k}) = \frac{1}{k} \sum_{j=1}^k f(z,\omega^j) ,
\end{equation}
 is a holomorphic function on $\Delta \times (\overline{\Delta})^k_{\mathrm{sym}}$. 
 For a suitable neighbourhood polydisc $\tilde \Delta \subset \Delta$, 
we can assume that $\Xi(\tilde \Delta) \subset (\overline{\Delta})^k_{\mathrm{sym}} $.

Hence we can define a holomorphic function 
$\averaging f \colon \tilde \Delta \to \C$
by
\begin{equation}\label{e:averagingdefined}
(\averaging f)(z) = g \left( z, \Xi (z) \right) =  \frac{1}{k} \sum_{j=1}^k f(z,\xi^j (z))
\end{equation}

This definition gives a (linear) map 
$\averaging \colon \cps{z, \bar z} \to \cps{z}$, and the reader can 
 check that $\averaging \colon \fps{z, \bar z} \to \fps{z}$ can 
be defined for formal power series, even if $\hat X$ is merely assumed to be formal. 

The next result summarizes the properties which show that 
 the operator $\averaging$ encodes the 
obstruction to holomorphic extension of a real-analytic function on
$(X,0)$.
\begin{lemma}\label{thm:averagingop} Let $( X, 0)$ be a germ of a real-analytic Segre nondegenerate 
variety. 


The mapping $\averaging$ has $\idealsheaf_0 (X) \subset \ker \averaging$.
In particular, it descends to a map (again denoted by the same letter)
$\averaging \colon \diffable{\omega}(X,0) \to \cps{z}$.
 



A function $f \in \diffable{\omega} (X,0)$ is the 
restriction of 
a germ of a holomorphic function on $\Cn$ if 
and only if  $\averaging f|_X = f|_X$.
\end{lemma}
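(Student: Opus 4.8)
The plan is to prove the two assertions in turn, leaning on two elementary facts about $\averaging$: that the complexification of a member of $\idealsheaf_0(X)$ vanishes identically on $\sX$, and that $\averaging$ reproduces holomorphic functions. Both are immediate from the definition \eqref{e:averagingdefined}, in which $(\averaging f)(z) = \frac{1}{k}\sum_{j=1}^k f(z,\xi^j(z))$ is an average of $f$ over the $k$ points of the Segre multifunction $\Xi(z) = (\pi_1|_{\sX})^{-1}(z)$.

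First I would settle the kernel statement. Let $\varrho \in \idealsheaf_0(X)$ and complexify it to $\varrho(z,\xi) \in \idealsheaf_0(\sX)$, which, by the identification $\idealsheaf_0(\sX) = \{\varrho(z,\xi) \colon \varrho(z,\bar z) \in \idealsheaf_0(X)\}$ recorded in the preliminaries, vanishes on all of $\sX$. Since each point $(z,\xi^j(z))$ of $\Xi(z)$ lies on $\sX$, we get $\varrho(z,\xi^j(z)) = 0$ for every $j$, and therefore $(\averaging \varrho)(z) = \frac{1}{k}\sum_{j=1}^k \varrho(z,\xi^j(z)) = 0$. Hence $\idealsheaf_0(X) \subset \ker \averaging$, and $\averaging$ descends to a well-defined map on the quotient $\diffable{\omega}(X,0) = \cps{z,\bar z}/\idealsheaf_0(X)$.

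Next I would record the reproduction property: if $F \in \cps{z}$, then its complexification is independent of $\xi$, so $(\averaging F)(z) = \frac{1}{k}\sum_{j=1}^k F(z) = F(z)$. With this the forward direction becomes immediate. If $f$ is the restriction of a holomorphic $F$, then $f(z,\bar z) - F(z) \in \idealsheaf_0(X)$, so by linearity together with the kernel statement and the reproduction property $\averaging f = \averaging F = F$ as holomorphic functions; consequently $\averaging f|_X = F|_X = f|_X$. For the converse, suppose $\averaging f|_X = f|_X$. Since $\averaging f$ is by construction a holomorphic function of $z$, this equality exhibits $f|_X$ as the restriction to $X$ of the holomorphic function $\averaging f$; by the very definition of being a restriction of a holomorphic function, $f$ is such a restriction. (In this sense the lemma is the operator-theoretic repackaging of Theorem~\ref{thm:constantonsegre}.)

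The only point requiring genuine care is the bookkeeping of the complexification: one must ensure that evaluating a complexified element of $\idealsheaf_0(X)$ on the multifunction $\Xi(z)$ is legitimate. This amounts to choosing the good polydiscs small enough that $\Xi(\tilde\Delta) \subset (\overline{\Delta})^k_{\mathrm{sym}}$ and that all evaluations, as well as $\averaging f$ itself, take place where everything is holomorphic, exactly as arranged when $\averaging$ was defined. Everything else is formal and, as the paper already notes for the construction of $\averaging$, goes through verbatim for formal power series, so the same argument proves the statement for a merely formal Segre nondegenerate $\hat X$ as well.
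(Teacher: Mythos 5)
Your proof is correct. The kernel statement and the converse direction (if $\averaging f|_X = f|_X$, then $f|_X$ is by definition the restriction of the holomorphic function $\averaging f$) are exactly as in the paper. Where you genuinely differ is the forward direction: the paper cites Theorem~\ref{thm:constantonsegre} to get constancy of the complexified $f$ on the fibers of $\pi_1|_{\sX}$ and deduces $\averaging f|_X = f|_X$ from that, whereas you bypass that theorem, writing $f = F + (f-F)$ with $f - F \in \idealsheaf_0(X)$ and combining linearity, the kernel statement, and the reproduction property $\averaging F = F$ for $F \in \cps{z}$. Your route is more elementary --- it uses none of the analytic continuation or Riemann extension machinery hidden in Theorem~\ref{thm:constantonsegre} --- and, as you note, it transfers verbatim to the formal setting of a formal Segre nondegenerate $\hat X$, which an appeal to the analytic theorem would not. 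What the paper's route buys is mainly expository economy: the lemma is presented as the operator-theoretic repackaging of Theorem~\ref{thm:constantonsegre}, so the citation makes that dependence explicit; but strictly speaking only the easy direction of that theorem is needed there, and its proof is in substance the same computation you carried out directly.
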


\begin{proof} We have already discussed linearity. If $\varrho \in \cps{z, \bar z}$ vanishes on $(X,0)$, its complexification 
vanishes on $\mathcal{X}$, and hence $\averaging \varrho =0$.
For the last statement, we only need to prove the necessity of 
the given characterization. 
By Theorem~\ref{thm:constantonsegre}, $f(z,\xi_1) = f(z,\xi_2)$ when
$(z,\xi_1), (z,\xi_2) \in \mathcal{X}$.    
It follows that $\averaging f|_X = f|_X$. 
\end{proof}

We note that together with Proposition~\ref{p:polynomialrepresentation},
this gives a rather complete picture of the obstructions
to holomorphicity: they are encoded in the behaviour of $\averaging$
on functions of the form $\bar z^\alpha$ for $|\alpha|<k$. Before 
we discuss this fact further, we give some examples. 

\begin{example}
\label{exa:averagingbishop} We again consider the Bishop surface 
$w = \lambda (z^2 + \bar z^2) + z \bar z$. The function $\Xi(z)$
is computed to be 
\[ \Xi(z) = \left\langle \left( \frac{ - z + \sqrt{-4 \lambda ^2 z^2+z^2+4 w
   \lambda }}{2 \lambda }, w \right) , \left( \frac{ -z -\sqrt{-4 \lambda ^2 z^2+z^2+4 w
   \lambda }}{2 \lambda } ,w\right)   \right\rangle . \]
One therefore computes that 
\[ \averaging \bar z = \frac{-z}{2 \lambda}, \quad \averaging \bar w = w.\]
   Note that 
   the computation of
   \[ \averaging \bar z^2 =\frac{2 \lambda  w+ (1-2 \lambda ^2)
   z^2}{\lambda ^2}.\] can be done
    using the defining relation 
   of the surface as well as direct 
   application of the definition of 
   the averaging operator. We will return to this 
   observation in more generality below. 
\end{example}

First, the averaging operator depends only on values on $X$, which is clear
since $\sA f$ is holomorphic, but we have more.
The following proposition replaces equality of two real-analytic
functions on $(X,0)$ by equality of two germs of holomorphic functions
$(\C^n,0)$.  In particular, since $\averaging$
is defined canonically, we no longer need to consider the defining functions
of $X$.  Furthermore this equality can now be done formally.

\begin{prop}\label{pro:detbypowers}
Suppose $(X,0) \subset \C^n$ is an irreducible germ of a Segre nondegenerate
$n$-dimensional real-analytic subvariety of multiplicity $k$.
Let $f$ and $g$ be germs of real-analytic functions.
Then $f|_X = g|_X$ if and only if
$\averaging ((f-g)^\ell) = 0$ for all $\ell = 1,\ldots,k$.
\end{prop}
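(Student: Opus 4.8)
The plan is to reduce to a single function by setting $h = f - g$; since complexification is a ring homomorphism, $(f-g)^\ell = h^\ell$ has complexification $h(z,\xi)^\ell$, and so by the definition of the averaging operator,
\[ \averaging\bigl( (f-g)^\ell \bigr)(z) = \frac{1}{k} \sum_{j=1}^k h\bigl(z,\xi^j(z)\bigr)^\ell, \qquad \Xi(z) = \inp{\xi^1(z),\dots,\xi^k(z)}. \]
Thus the claim becomes the equivalence: $h|_X = 0$ if and only if $\averaging(h^\ell) = 0$ for $\ell = 1,\dots,k$.

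For the forward implication, I would argue that $h|_X = 0$ means $h \in \idealsheaf_0(X)$. As $\idealsheaf_0(X)$ is an ideal, $h^\ell \in \idealsheaf_0(X)$ for every $\ell \geq 1$, and since $\idealsheaf_0(X) \subset \ker \averaging$ by Lemma~\ref{thm:averagingop}, we get $\averaging(h^\ell) = 0$ for all $\ell$, in particular for $\ell = 1, \dots, k$.

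For the converse, the main idea is that $k\,\averaging(h^\ell)(z)$ is precisely the $\ell$-th power sum $S_\ell$ of the $k$ values $w_j(z) = h(z,\xi^j(z))$. The hypothesis says $S_1 = \dots = S_k = 0$ as holomorphic functions of $z$. Fixing a $z$ off the discriminant set of $\pi_1|_{\sX}$, Newton's identities (valid since $\C$ has characteristic zero and we only need $S_1,\dots,S_k$) force the elementary symmetric functions $e_1,\dots,e_k$ of the multiset $\{w_1(z),\dots,w_k(z)\}$ to vanish, whence $\prod_{j=1}^k (t - w_j(z)) = t^k$ and every $w_j(z) = 0$. Therefore $h$ vanishes at every point $\bigl(z,\xi^j(z)\bigr)$ of $\sX$; as these points are dense in the irreducible variety $\sX$, the identity principle gives $h \equiv 0$ on $\sX$, and restriction to the diagonal $\xi = \bar z$ yields $h|_X = 0$.

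The step demanding the most care is this last pointwise symmetric-function argument: the individual branches $w_j(z)$ are only locally defined and their labeling is ambiguous, so one must phrase everything in terms of the single-valued symmetric quantities $S_\ell$ and $e_m$, concluding the vanishing of the unordered multiset $\{w_j(z)\}$ rather than of named branches, and only then pass to all of $\sX$ by analyticity. Since the heart of the matter is purely algebraic in the power sums, I expect the same argument to establish the statement verbatim in the formal category as well.
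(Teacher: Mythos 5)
Your proposal is correct and follows essentially the same route as the paper: reduce to $h=f-g$, obtain the forward direction from $\idealsheaf_0(X)\subset\ker\averaging$, and for the converse use that the vanishing of the power sums $S_1,\dots,S_k$ of the $k$ fiber values $h(z,\xi^j(z))$ forces the whole unordered multiset to vanish. The only cosmetic difference is the final descent: the paper evaluates at a point $z\in X$ itself and uses that $\bar z$ is one of the points of $\Xi(z)$ (Lemma~\ref{lem:propertiessegre}, part ii)), which yields $h(z,\bar z)=0$ immediately, whereas you pass through vanishing on a dense subset of $\sX$ and then restrict to the diagonal; both steps are valid.
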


\begin{proof}
If $f|_X = g|_X$, then also their complexifications are
equal on the complexification of $X$.  For a point on $X$,
$\averaging f$ only depends on the values of $f$ on the complexification of
$X$, and so $\averaging f = \averaging g$; the same argument holds for all 
powers of $f$ and $g$. 

On the other hand suppose
$\averaging ((f-g)^\ell) =0$ for all $\ell = 1,\ldots,k$.
Consider a good neighbourhood $\Delta_z \times \Delta_\xi$ for $X$.
We can assume that $X$ is a closed subset of $\Delta_z$.
Fix a point $z \in X$.  By the definition of the averaging operator
and the hypothesis of the proposition,
\begin{equation}
\sum_{j=1}^k {
\left(f(z,\xi^j (z))-g(z,\xi^j (z))\right)}^\ell =
0
\end{equation}
The power sums for powers $1$ through $k$
uniquely determine an unordered set of $k$ complex numbers.  Hence
it must be that 
$f(z,\xi^j(z)) = g(z,\xi^j (z))$ for all $j$.  Since $z \in X$, then
for at least one $j$, we have $\xi_j(z) = \bar{z}$ and therefore,
$f(z,\bar{z}) = g(z,\bar{z})$.
\end{proof}

Similarly, we can check if a function is a restriction of a holomorphic
function.

\begin{thm}
Suppose $(X,0) \subset \C^n$ is an irreducible germ of a Segre nondegenerate
$n$-dimensional real-analytic subvariety of multiplicity $k$.
Suppose that $f(z,\bar{z})$ is a
real-analytic function.  Then $f|_X \in \cps{z}$ 
if and only if
$\averaging (f^\ell) = {(\averaging f)}^\ell$ for all $\ell = 1,\ldots,k$.
\end{thm}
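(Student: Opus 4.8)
The plan is to reduce this statement to the already-established Proposition~\ref{pro:detbypowers}. The key observation is that $f|_X \in \cps{z}$ means precisely that $f|_X = F|_X$ for some holomorphic germ $F \in \cps{z}$, so I want to produce the natural candidate for $F$ and then verify the equivalence. The natural candidate is of course $F = \averaging f$, which is always holomorphic by construction. Thus the theorem should be rephrased as: $f|_X = (\averaging f)|_X$ if and only if $\averaging(f^\ell) = (\averaging f)^\ell$ for all $\ell = 1,\ldots,k$.

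For the forward direction, suppose $f|_X \in \cps{z}$, say $f|_X = F|_X$ with $F$ holomorphic. By Lemma~\ref{thm:averagingop}, applied to $F$ (which is its own holomorphic restriction), we have $\averaging F = F$, and since $\averaging$ depends only on values on the complexification of $X$, the equality $f|_X = F|_X$ gives $\averaging f = \averaging F = F$. Then for each power, $\averaging(f^\ell)$ depends only on $f^\ell|_{\mathcal{X}} = F^\ell|_{\mathcal{X}}$, so $\averaging(f^\ell) = \averaging(F^\ell) = F^\ell = (\averaging f)^\ell$, using $\averaging F^\ell = F^\ell$ from Lemma~\ref{thm:averagingop} again (as $F^\ell$ is holomorphic). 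This gives all $k$ identities.

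For the converse, I would apply Proposition~\ref{pro:detbypowers} with $g := \averaging f$. Set $F = \averaging f$, a holomorphic germ. The hypothesis $\averaging(f^\ell) = (\averaging f)^\ell = F^\ell$ combined with $\averaging(F^\ell) = F^\ell$ (Lemma~\ref{thm:averagingop}, since $F$ is holomorphic) yields $\averaging(f^\ell) = \averaging(F^\ell)$ for $\ell = 1,\ldots,k$. The main technical point is to convert this into the hypothesis of Proposition~\ref{pro:detbypowers}, which requires $\averaging((f-F)^\ell) = 0$ rather than $\averaging(f^\ell) = \averaging(F^\ell)$. Since $F = F(z)$ is antiholomorphic-free (it does not involve $\bar z$), it is constant along the fibers of $\pi_1$, so on the complexification $F(z,\xi) = F(z)$ is independent of $\xi$; expanding $(f - F)^\ell = \sum_{m=0}^{\ell} \binom{\ell}{m}(-F)^{\ell-m} f^m$ and averaging, each term pulls the factor $F(z)^{\ell-m}$ outside the average to give $\averaging((f-F)^\ell) = \sum_{m} \binom{\ell}{m}(-F)^{\ell-m}\averaging(f^m)$. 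Using $\averaging(f^m) = F^m$ for all $m \le k$ (the hypothesis, with $\averaging(f^0) = 1 = F^0$ trivially), this collapses to $\sum_m \binom{\ell}{m}(-F)^{\ell-m}F^m = (F - F)^\ell = 0$.

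With $\averaging((f-F)^\ell) = 0$ established for all $\ell = 1,\ldots,k$, Proposition~\ref{pro:detbypowers} applied to the pair $f$ and $F$ gives $f|_X = F|_X = (\averaging f)|_X$, which exhibits $f|_X$ as the restriction of the holomorphic function $\averaging f$, so $f|_X \in \cps{z}$ as desired. The step I expect to require the most care is the binomial bookkeeping that turns the power-equalities $\averaging(f^\ell) = (\averaging f)^\ell$ into the vanishing-of-powers condition $\averaging((f-\averaging f)^\ell)=0$; the crucial structural fact making it work is that $\averaging f$ is holomorphic and therefore factors out of the average over the Segre fiber, so that averaging behaves multiplicatively against powers of $F$.
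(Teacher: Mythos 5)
Your proposal is correct and follows essentially the same route as the paper: both directions match, including the key step of setting $F = \averaging f$, expanding $\averaging\bigl((f-F)^\ell\bigr)$ binomially using that the holomorphic factor $F^{\ell-m}$ pulls out of the average, collapsing the sum to $(F-F)^\ell = 0$ via the hypothesis, and then invoking Proposition~\ref{pro:detbypowers}. Your write-up is somewhat more explicit than the paper's (justifying why $F$ factors out of the average and handling the $\ell=0$ term), but the argument is the same.
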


\begin{proof}
If $f|_X = h|_X$ for a holomorphic $h$, then
$f^\ell|_X = h^\ell|_X$ for all $\ell$ and therefore
$\averaging(f^\ell) = h^\ell = \averaging(f)^\ell$ for all $\ell$.

On the other hand, suppose
$\averaging (f^\ell) = {(\averaging f)}^\ell$ for all $\ell = 1,\ldots,k$
and let 
$h = \averaging f$.  Then 
\[ \averaging (f-h)^\ell = \sum_{j=0}^\ell (-1)^j \binom{\ell}{j}
h^{\ell-j} \averaging (f^j) = 0,  \]
and so by Proposition~\ref{pro:detbypowers}, $f= \averaging f$, i.e. 
$f$ is holomorphic. 
\end{proof}







\section{Flattening and other applications of averaging}
\label{sec:flattening}

Even though we introduced the averaging operator for arbitrary 
real-analytic function germs on $(X,0)$, 
it is completely determined by its action on the subspace 
$\cps{\bar z} \subset \diffable{\omega} (X,0)$. 
We therefore define the {\em restricted averaging operator}
\begin{equation}
\restricted = \averaging|_{\cps{\bar z}} \colon \cps{\bar z} \to \cps{z}.
\end{equation}
The operator $\averaging$ can be recovered from $\restricted$ since
\begin{equation}
\averaging(z^\alpha \bar{z}^\beta)
= z^\alpha \averaging(\bar{z}^{\beta})
= z^\alpha \restricted(\bar{z}^{\beta}) .
\end{equation}
We can use the restricted averaging operator to characterize another 
important property of a real-analytic variety $(X,0)$, namely, 
whether it is {\em flattenable}. We will say that $(X,0)$ can 
be flattened if there exists a germ of a holomorphic function $f(z) \in \cps{z}$ 
such 
that $f|_X$ has real values. We shall write $\bar f \in \cps{\bar z}$ for 
the antiholomorphic function defined by $\bar f ( \bar z ) = \overline{f(z)}$.
The terminology is explained by the fact that $(X,0)$ can be flattened 
if and only if there exists a (possibly singular) Levi flat hypersurface, defined
by $\Im f = 0$, containing $(X,0)$. 
Also being flattenable is encoded in the 
averaging operator. 



\begin{thm}\label{thm:realhol}
Suppose $(X,0) \subset \C^n$ is an irreducible germ of a Segre nondegenerate
$n$-dimensional real-analytic subvariety of multiplicity $k$.
Suppose that $f(z,\bar{z})$ is a
real-analytic function.  Then a holomorphic function $f$ is real-valued
on $X$ (that is, $X$ is flattenable) if and only if
$\restricted (\bar{f}^\ell) = f^\ell$ for all $\ell = 1,\ldots,k$.
\end{thm}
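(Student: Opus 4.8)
The plan is to reduce the statement to Proposition~\ref{pro:detbypowers} by translating ``$f$ is real-valued on $X$'' into the equality $f|_X = \bar f|_X$. Indeed, for $z \in X$ we have $\overline{f(z)} = \bar f(\bar z)$, so the holomorphic $f$ takes real values on $X$ precisely when the function $(z,\bar z)\mapsto f(z)$ and the antiholomorphic function $(z,\bar z)\mapsto \bar f(\bar z)$ agree on $X$. I would first record two properties of $\averaging$ to be used repeatedly: that $\averaging$ reproduces holomorphic functions, so $\averaging(f^\ell) = f^\ell$ since $f^\ell$ depends only on $z$; and multiplicativity against holomorphic factors, $\averaging(h\cdot g) = h\cdot \averaging(g)$ for holomorphic $h$, which holds because $h(z)$ factors out of the sum $\tfrac1k\sum_j h(z)g(\xi^j(z))$ defining $\averaging$. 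Together with $\restricted = \averaging|_{\cps{\bar z}}$, these give $\averaging(f^{\ell-j}\bar f^{\,j}) = f^{\ell-j}\restricted(\bar f^{\,j})$.

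For the ``only if'' direction, I would assume $f|_X = \bar f|_X$. Then $\bar f^{\,\ell} - f^\ell$ vanishes on $X$, hence lies in $\idealsheaf_0(X) \subset \ker\averaging$ by Lemma~\ref{thm:averagingop}, so $\averaging(\bar f^{\,\ell}) = \averaging(f^\ell)$. The left side equals $\restricted(\bar f^{\,\ell})$ since $\bar f^{\,\ell}\in\cps{\bar z}$, and the right side equals $f^\ell$ because $\averaging$ reproduces holomorphic functions; this yields $\restricted(\bar f^{\,\ell}) = f^\ell$ for every $\ell$, in particular for $\ell = 1,\dots,k$.

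For the ``if'' direction, I would assume $\restricted(\bar f^{\,\ell}) = f^\ell$ for $\ell = 1,\dots,k$ (the case $\ell=0$ holding trivially as $\restricted(1)=1$) and deduce $f|_X = \bar f|_X$ through Proposition~\ref{pro:detbypowers} applied to the pair $f$ and $\bar f$. Expanding binomially and invoking the two properties above,
\[
\averaging\bigl((f-\bar f)^\ell\bigr)
= \sum_{j=0}^\ell (-1)^j \binom{\ell}{j} f^{\ell-j}\restricted(\bar f^{\,j})
= f^\ell \sum_{j=0}^\ell (-1)^j \binom{\ell}{j}
= f^\ell (1-1)^\ell = 0
\]
for $1\le \ell \le k$, where the middle step substitutes $\restricted(\bar f^{\,j}) = f^j$. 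Proposition~\ref{pro:detbypowers} then gives $f|_X = \bar f|_X$, i.e.\ $f$ is real-valued on $X$.

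Since every step is a direct application of results already established, I do not anticipate a genuine obstacle; the only point requiring care is the bookkeeping of the ranges of $\ell$ and $j$, so that the substitution $\restricted(\bar f^{\,j}) = f^j$ is legitimate (it is, because in expanding $(f-\bar f)^\ell$ with $\ell \le k$ one meets only exponents $j \le k$). This computation mirrors exactly the one in the proof of the immediately preceding theorem, with $\restricted(\bar f^{\,\ell})$ playing the role there played by $\averaging(f^\ell)$.
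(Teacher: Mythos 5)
Your proof is correct and follows essentially the same route as the paper: the forward direction uses that powers of $f$ and $\bar f$ agree on $X$ together with reproduction of holomorphic functions, and the converse is the same binomial expansion $\averaging\bigl((f-\bar f)^\ell\bigr)=\sum_{j}(-1)^j\binom{\ell}{j}f^{\ell-j}\restricted(\bar f^{\,j})=0$ followed by Proposition~\ref{pro:detbypowers}. You are merely more explicit than the paper in spelling out the multiplicativity $\averaging(h\cdot g)=h\cdot\averaging(g)$ for holomorphic $h$ and in citing Proposition~\ref{pro:detbypowers} at the end, a step the paper leaves implicit.
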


\begin{proof}
If $f$ is a holomorphic function that is real-valued on $X$,
then $\bar{f}^\ell|_X = f^\ell|_X$ for all $\ell$.  Hence
$\restricted (\bar{f}^\ell) = \restricted (f^\ell) = f^\ell$ for all $\ell$.

On the other hand, suppose that
$\restricted (\bar{f}^\ell) = f^\ell$ for all $\ell = 1,\ldots,k$.
We can then compute that 
\[ \averaging(f - \bar f)^\ell = 
\sum_{j=0}^\ell (-1)^j \binom{\ell}{j} f^j \restricted(\bar f^{\ell-j}) =0. \]
\end{proof}



As a final application of the restricted averaging operator $\restricted$, we note that it  contains all the information necessary to define $X$.

\begin{thm}
\label{thm:uniquedet} Let $M_1$, $M_2$ be germs of  Segre-nondegenerate real-analytic subvarieties of multiplicity $k$, and let $\restricted_1$, $\restricted_2 $ be their restricted averaging operators. If $\restricted_1 \bar z^\beta = \restricted_2 \bar z^\beta$ for every $\beta$ with $|\beta|\leq k$, then $M_1 = M_2$. 
\end{thm}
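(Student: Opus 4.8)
The plan is to read the hypothesis as an identity between power sums of the two Segre multifunctions, and then to invoke the pointwise rigidity recorded in Lemma~\ref{lem:powers}. First I would place both germs inside a common $\pi_1$-good polydisc $\Delta_z \times \Delta_\xi$, write $\mathcal{X}_i$ for the complexification of $M_i$, and let
\[ \Xi_i(z) = (\pi_1|_{\mathcal{X}_i})^{-1}(z) = \inp{\xi_i^1(z), \dots, \xi_i^k(z)} \in (\Cn)^k_{\rm sym} \]
be the multifunction from \eqref{e:multi2}. Complexifying the antiholomorphic monomial $\bar z^\beta$ sends $\bar z$ to $\xi$, so by the definition \eqref{e:averagingdefined} of the averaging operator,
\[ (\restricted_i\, \bar z^\beta)(z) = \frac{1}{k}\sum_{j=1}^k \bigl(\xi_i^j(z)\bigr)^\beta = \frac{1}{k}\, S_\beta\bigl(\Xi_i(z)\bigr) \]
is the normalized power sum. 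Hence the hypothesis $\restricted_1 \bar z^\beta = \restricted_2 \bar z^\beta$ for $|\beta|\le k$ says exactly that $S_\beta(\Xi_1(z)) = S_\beta(\Xi_2(z))$ for all $|\beta|\le k$ and all $z$ in a common neighbourhood of the origin.

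Next I would apply Lemma~\ref{lem:powers}(ii) pointwise. For each fixed $z$, the two symmetric points $\Xi_1(z),\Xi_2(z) \in (\Cn)^k_{\rm sym}$ share all power sums $S_\beta$ with $|\beta|\le k$, and are therefore equal; letting $z$ vary gives $\Xi_1 \equiv \Xi_2$ as holomorphic multifunctions. Since each complexification $\mathcal{X}_i$ is precisely the multigraph of $\Xi_i$ near the origin, this forces $\mathcal{X}_1 = \mathcal{X}_2$ as germs of complex subvarieties. Intersecting with the diagonal $z = \bar\xi$, which recovers the real germ $M_i$ by the remark following Proposition~\ref{prop:reflection}, I conclude $M_1 = M_2$.

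I expect the genuine content to be concentrated entirely in the translation carried out in the first paragraph, namely identifying $\restricted_i \bar z^\beta$ with the power sums $S_\beta(\Xi_i(z))$ and observing that the range $|\beta|\le k$ is exactly what Lemma~\ref{lem:powers}(ii) demands. The only delicate bookkeeping is to arrange a single neighbourhood on which both complexifications are $\pi_1$-good and all the germs $\restricted_i \bar z^\beta$ are simultaneously defined and agree; once this is in place there is no analytic obstacle, since the reconstruction of $M_i$ from $\Xi_i$ is already supplied by the preliminaries.
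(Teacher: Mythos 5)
Your proposal is correct and follows essentially the same route as the paper: both hinge on identifying $\restricted_i \bar z^\beta$ with the normalized power sums $S_\beta$ of the Segre multifunction $\Xi_i$, and on the fact that power sums up to degree $k$ determine the symmetric point (equivalently, the elementary symmetric functions). The paper packages the final step as an explicit reconstruction of the standard defining equations \eqref{e:standard} from those symmetric functions, whereas you apply Lemma~\ref{lem:powers}(ii) pointwise and then identify the multigraphs; this is the same argument in slightly different clothing.
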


\begin{proof}
We claim that given the restricted averaging operator of any such variety $M$ allows
us to reconstruct the defining equation. The restricted averaging operator 
gives us the power sums of all monomials $\bar z^\beta$ for $|\beta|\leq k$, from which we 
can explicitly compute the elementary symmetric  functions (see e.g. \cite{MR3443860}) 
needed to construct the 
standard defining equations \eqref{e:standard}. Thus, $M$ is uniquely determined by 
its restricted averaging operator. 
\end{proof}

\section{Examples of flattening} \label{sec:examplesofflat} 

\label{sec:flattening_cubic_models_in_}
We are now going to consider in some detail models of the form
\[ M_p \colon w = p(z, \bar z) = \sum_j \alpha_j z^j {\bar z}^{k-j}, \]
in particular, for the cases $k=2,3$. These were considered  
by Moser--Webster~\cite{MR709143} for $k=2$ 
and 
Harris~\cite{MR773068} for $k\geq 2$ before. Harris showed that one can consider them 
as the lowest order invariant (in a suitable sense) at a CR singular 
point of a codimension $2$ submanifold of $\C^2$.

We shall, for our purposes, introduce weights $k$ for 
$w$ and $1$ for $z$ and use a bit of a different normalization 
than Harris did, adapted in 
particular to the case that we are interested in, 
namely, that $M_p$ is 
Segre nondegenerate, which means that $p(z,0) \nequiv 0$ in our setting. 

This allows us to  choose coordinates in such a way 
that $\alpha_k = 0$ and $\alpha_0 = 1$. This fixes coordinates $(z,w)$ for $M_p$ up to a finite group of rotations in $z$, unless $p(z,\bar z) = \bar z^{2 k} + \alpha_k |z|^{2 k} $; in that 
case, we can normalize further so that $\alpha_k \geq 0$.

The averaging operator for 
$M_p$ will be denoted by $\averaging_p$, and the restricted averaging operator for $M_p$ will be denoted 
by  $\restricted_p$. We also denote the space 
of (weighted) bihomogeneous polynomials of degree $a$ in $(z,w)$
and $b$ in $(\bar z, \bar w)$ by
\[ \hompoly_{a,b} := \left\{ p \in \C[z,w,\bar z, \bar w] \colon
p(t z, t^k w, s \bar z, s^k \bar w) = t^a s^b p(z,w,\bar z, \bar w) \right\} .\]

We then have 
\begin{lemma}
\label{lem:averagehom} The averaging operator $\averaging_p$ maps 
weighted homogeneous polynomials to weighted homogeneous
polyonomials: 
\[ \averaging_p \colon \hompoly_{a,b}\to \hompoly_{a+b, 0}, 
\quad \restricted_p \colon \hompoly_{0,b} \to \hompoly_{b,0}.  \] 
\end{lemma}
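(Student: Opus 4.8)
The plan is to track the weighted homogeneity through the explicit formula for the averaging operator. The key observation is that the model $M_p$ carries a natural weighted $\C^*$-action: assigning weight $1$ to $z$ and weight $k$ to $w$, the defining equation $w = p(z,\bar z) = \sum_j \alpha_j z^j \bar z^{k-j}$ is weighted homogeneous, since every monomial $z^j \bar z^{k-j}$ has $z$-weight $k$ matching the weight of $w$. This scaling symmetry of $M_p$ should transfer to a scaling equivariance of the multifunction $\Xi(z) = \langle \xi^1(z), \dots, \xi^k(z)\rangle$ that computes the Segre preimages, and hence to the averaging operator defined in \eqref{e:averagingdefined}.

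First I would make the weighted scaling precise. For $t \in \C^*$ consider the map $\sigma_t(z,w) = (tz, t^k w)$, and correspondingly $\sigma_t^*(\bar z, \bar w) = (\bar t \bar z, \bar t^k \bar w)$ on the antiholomorphic side. Because the complexification $\sX$ of $M_p$ is cut out by weighted homogeneous equations, the fiber structure is equivariant: the preimage $\Xi(tz, t^k w)$ is obtained from $\Xi(z,w)$ by applying the corresponding scaling in the $\xi$-variables. Concretely, if $(\zeta, \omega)$ is one of the $k$ Segre points over $(z,w)$, then $(t\zeta, t^k\omega)$ is a Segre point over $(tz, t^kw)$ (here one must check the multifunction respects the weights, which follows from uniqueness of the $k$-valued graph and the invariance of the defining equations).

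Next I would feed this equivariance into the definition of $\averaging_p$. Take $q \in \hompoly_{a,b}$, so $q(t z, t^k w, s \bar z, s^k \bar w) = t^a s^b\, q$. Evaluating the average $(\averaging_p q)(z,w) = \frac{1}{k}\sum_j q(z,w,\xi^j(z,w))$ at the scaled point $(tz,t^kw)$ and substituting the scaled Segre points, the holomorphic arguments contribute a factor $t^a$ and the antiholomorphic arguments, now scaled by $t$ as well, contribute a factor $t^b$. This yields $(\averaging_p q)(tz, t^k w) = t^{a+b} (\averaging_p q)(z,w)$, i.e. $\averaging_p q \in \hompoly_{a+b,0}$; the second index is $0$ because $\averaging_p q$ is holomorphic. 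The statement for $\restricted_p$ is the special case $a=0$, since $\restricted_p = \averaging_p|_{\cps{\bar z}}$ acts on $q \in \hompoly_{0,b}$ and lands in $\hompoly_{b,0}$.

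The main obstacle I expect is justifying the scaling equivariance of the Segre multifunction rigorously, since $\Xi$ is only a $k$-valued holomorphic object defined via the implicit multigraph of $\sX$ and its individual branches need not be single-valued. The clean way around this is to work with the symmetric (elementary-symmetric-function) description from \S\ref{sub:symmetric_functions_and_standard_defining_equations}: rather than tracking individual branches $\xi^j$, I would verify that the standard defining equations $\Phi_\gamma(z,w,\xi)$ of $M_p$ are themselves weighted homogeneous, and deduce from \eqref{e:coefficientsstandard} that the coefficient multifunction $Z(\xi)$ scales correctly. Since the averaging operator is a symmetric function of the Segre points, it is expressible in these elementary symmetric quantities, and the homogeneity then follows without ever singling out a branch.
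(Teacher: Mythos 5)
Your proof is correct, but it takes a genuinely different route from the paper. The paper first reduces to the restricted operator $\restricted_p$, then argues by explicit computation: by Vieta's formulas the elementary symmetric functions of the branches $\zeta^j(z,w)$ are $\alpha_j z^j$ and $-w$, so after substituting $\bar w = \bar p(\bar z,z)$ every term of $\restricted_p(\bar z^r\bar w^s)$ is a power sum of the $\zeta^j$, which by the fundamental theorem on symmetric polynomials is a polynomial in these weighted homogeneous quantities of the correct weighted degree. You instead exploit the weighted $\C^*$-equivariance: the complexification $\sX$ of $M_p$ is invariant under $(z,w,\zeta,\omega)\mapsto(tz,t^kw,t\zeta,t^k\omega)$, so the unordered Segre fiber over $(tz,t^kw)$ is the scaled fiber over $(z,w)$, and evaluating $q\in\hompoly_{a,b}$ with $s=t$ gives $(\averaging_p q)(tz,t^kw)=t^{a+b}(\averaging_p q)(z,w)$, whence $\averaging_p q$ is a weighted homogeneous polynomial in $\hompoly_{a+b,0}$. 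Your worry about multivaluedness of the branches is in fact harmless and needs no symmetric-function detour: the average depends only on the unordered fiber (with multiplicity), and it is precisely the unordered fiber that the scaling maps bijectively; your fallback in the last paragraph, if carried out, would essentially reproduce the paper's computation. The trade-off: your equivariance argument is shorter, more conceptual, and applies verbatim to any model whose complexification is invariant under a weighted scaling, while the paper's explicit symmetric-function computation produces formulas (power sums in terms of $\alpha_j z^j$ and $-w$) that are reused later, e.g.\ in the generating-function Lemma~\ref{l:generating} and the quadric example.
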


\begin{proof}
It is enough to show that the restricted averaging operator
satisfies the claim. Denoting as usual by $\xi^1 (z,w) , \dots , \xi^k (z,w) $
the points $(z,w,\xi^j(z,w)) \in \mathcal{X}$, where the 
complexification is now defined on all of $\C^4$, we write
$\xi^j = (\zeta^j, \eta^j)$ and first find 
that the elementary symmetric functions $s^1, \dots , s^k$ of the $\zeta^j$
satisfy
\[ s^k (\zeta^1 (z,w), \dots ,\zeta^k (z,w)) = - w , \quad 
s^j (\zeta^1 (z,w), \dots ,\zeta^k (z,w)) = \alpha_j z^j. \]

If we now compute the restricted averaging operator, we have 
\[
\begin{aligned}
 \mathcal{R} (\bar z^r \bar w^s ) &= 
\mathcal{R} (\bar z^r \bar p (\bar z, z)^s) \\
& = \mathcal{R} \left( \sum_{\ell = s}^{ks} A_\ell \bar z^{r + ks - \ell} z^\ell \right) \\
& = \sum_{\ell = s}^{ks} A_\ell \left(\frac{1}{k}\sum_{j=1}^k (\zeta^j (z,w))^{r+ks-\ell} z^\ell   \right).  
 \end{aligned} \]
 Now each of the power sums $\frac{1}{k}\sum_{j=1}^k (\zeta^j (z,w))^{r+ks-\ell}$ is homogeneous (in $z$ and $w$) of 
 degree $r+ks - \ell$, because by the basic theorem 
 on symmetric polynomials, 
 we can rewrite the power sum as a polynomial of the form
 \[ \begin{aligned}
 \frac{1}{k}\sum_{j=1}^k (\zeta^j (z,w))^{r+ks-\ell}
 &= S(s^1 (\zeta(z,w)), \dots , s^k(\zeta(z,w))) \\ &= S(\alpha_1 z, \alpha_2 z^2, \dots , \alpha_{k-1} z^{k-1}, -w)
 \end{aligned}  \]
where $S(x_1, \dots ,x_k)$ is weighted
homogeneous of degree $r+ks - \ell$, when 
$x_j$ has weight $j$.  The claim follows.
\end{proof}

The representation of a real-analytic function as a polynomial 
in $(\bar z, \bar w)$ with holomorphic coefficients is also very 
simple on our models. 

\begin{lemma}
\label{lem:repmodel} Every real-analytic germ $f \in \diffable{\omega} (M_p,0)$
can be written uniquely in the form 
\[ f(z,w,\bar z, \bar w) = \sum_{j=0}^{k-1} f_j (z,w) \bar z^j. \] 
\end{lemma}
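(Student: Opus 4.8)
The plan is to exploit the two explicit defining relations of $M_p$, reducing an arbitrary real-analytic germ to the asserted form by a Weierstrass division in $\bar z$ after $\bar w$ has been eliminated. On $M_p$ we have $w = p(z,\bar z)$ and, taking complex conjugates, $\bar w = \bar p(\bar z, z) = \sum_j \overline{\alpha_j}\,\bar z^j z^{k-j}$; the second relation expresses $\bar w$ as a \emph{polynomial} in $z$ and $\bar z$ with no occurrence of $\bar w$. I would therefore first eliminate $\bar w$: given a real-analytic representative $\tilde f(z,w,\bar z,\bar w)$ of $f$, substitute $\bar w = \bar p(\bar z,z)$ to obtain a germ $g(z,w,\bar z)\in\cps{z,w,\bar z}$ that agrees with $f$ on $M_p$. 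Since $\bar p$ is a polynomial, this substitution preserves convergence, and it reduces the problem to representing $g$ modulo the single relation $w = p(z,\bar z)$.

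Next I would use $w = p(z,\bar z)$ to cut down the $\bar z$-degree. Thanks to the normalization $\alpha_0 = 1$ (and $\alpha_k = 0$), the germ
\[ q(z,w,\bar z) := p(z,\bar z) - w = \bar z^k + \alpha_1 z\,\bar z^{k-1} + \cdots + \alpha_{k-1}z^{k-1}\bar z - w, \]
viewed as a polynomial in $\bar z$ with coefficients in $\cps{z,w}$, is monic of degree $k$ with every lower coefficient (the $\alpha_i z^i$ and $-w$) vanishing at the origin; that is, $q$ is a Weierstrass polynomial of degree $k$ in $\bar z$. Weierstrass division of $g$ by $q$ yields $g = q\cdot h + r$ with $r = \sum_{j=0}^{k-1} f_j(z,w)\,\bar z^j$ and $f_j\in\cps{z,w}$. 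As $q$ vanishes on $M_p$, we conclude $f = g = r$ on $M_p$, which gives existence of the desired form.

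For uniqueness, suppose $\sum_{j=0}^{k-1} f_j(z,w)\,\bar z^j$ vanishes on $M_p$ with $f_j\in\cps{z,w}$. A real-analytic function vanishing on $M_p$ has complexification vanishing on $\mathcal{X}$, so after the complexification $\bar z\mapsto\xi$ the germ $\sum_{j=0}^{k-1} f_j(z,w)\,\xi^j$ vanishes on $\mathcal{X}$. For generic fixed $(z,w)$ the fiber $(\pi_1|_{\mathcal{X}})^{-1}(z,w)$ consists of the $k$ \emph{distinct} roots $\xi^1(z,w),\dots,\xi^k(z,w)$ of $q(\cdot)=0$, and a polynomial of degree at most $k-1$ in $\xi$ that vanishes at $k$ distinct points is identically zero; hence each $f_j(z,w)=0$ for generic, and therefore for all, $(z,w)$. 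This establishes uniqueness.

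I expect the only point requiring care is the bookkeeping that identifies $q = p(z,\bar z)-w$ as a genuine Weierstrass polynomial \emph{in the conjugate variable} $\bar z$, with the holomorphic parameters $(z,w)$ playing the role of coefficients; here the normalization $\alpha_0 = 1$ is exactly what guarantees monicity. Everything else, namely the convergence-preserving substitution $\bar w\mapsto\bar p(\bar z,z)$ and the passage to the complexification together with the generic distinctness of the $k$ Segre preimages used for uniqueness, is routine and already available from the earlier sections.
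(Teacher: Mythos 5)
Your proof is correct and follows essentially the same route as the paper: the paper's first step is Weierstrass division by $\bar w - \bar p(\bar z, z)$, whose remainder is exactly your substitution $\bar w \mapsto \bar p(\bar z, z)$, and its second step is the same division by $w - p(z,\bar z)$ regarded as $k$-regular (indeed, as you note, a genuine Weierstrass polynomial) in $\bar z$. The only deviation is uniqueness, which the paper dispatches by citing its averaging-operator lemma (Lemma~\ref{thm:averagingop}), while you give an equivalent self-contained argument via the complexification and the generic distinctness of the $k$ roots of $p(z,\cdot)-w$; both reduce to the same Vandermonde-type fact.
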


\begin{proof}
We use Weierstrass division: First, we divide $f(z, w, \bar z, \bar w)$ 
by $\bar w - \bar p (\bar z, z)$, which yields 
\[ f(z,w, \bar z, \bar w) = \left(\bar w - \bar p (\bar z, z)  \right)q(z,w,\bar z, \bar w) + r(z, w,\bar z), \]
and then the remainder $r$ by $w - p(z,\bar z)$, regarded as a $k$-regular 
function in $\bar z$: 
\[ r(z,w, \bar z) = \left( w -  p (z, \bar z)  \right)\tilde q(z,w,\bar z) +
 \sum_{j=0}^{k-1} f_j (z,w) \bar z^j. \]

For the last part of the statement, 
apply Theorem~\ref{thm:averagingop}.
\end{proof}

Let us remark that the product of two such representations 
can be computed quite efficiently, since one can use the defining equation 
\[ \bar z^k + \alpha_1 \bar z^{k-1} z + \dots + \alpha_{k-1} \bar z z^{k-1} - w = 0 \]
to express $\bar z^j$ for $j\geq k$ recursively. 

An adaptation of that idea is the basis for the proof of the 
following: 

\begin{lemma}
\label{l:generating} The restricted averaging operator 
satisfies 
\[ R(\bar z^a \bar w^b) = \begin{cases}
O(z) & b\geq 1 \text{ or } a \not\cong 0 \mod k, \\
w^{\frac{a}{k}} + O(z) & b=0 \text{ and } a \cong 0 \mod k. 
\end{cases} \]
\end{lemma}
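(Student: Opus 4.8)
The plan is to compute $\restricted_p(\bar z^a \bar w^b)$ modulo the ideal generated by $z$, which is all the statement asks for: since $\restricted_p$ takes values in $\cps{z,w}$, the symbol $O(z)$ means precisely that the value vanishes on $\{z=0\}$ (equivalently, is divisible by $z$). As in the proof of Lemma~\ref{lem:averagehom}, I would work with the complexification $\mathcal{X} \subset \C^4_{(z,w,\zeta,\eta)}$ of $M_p$, whose defining equations are $w = p(z,\zeta)$ and $\eta = \bar p(\zeta,z)$, where $\bar p(s,t) = \sum_j \bar\alpha_j s^j t^{k-j}$. For fixed $(z,w)$ the fiber of $\pi_1|_{\mathcal{X}}$ over $(z,w)$ consists of the points $(z,w,\zeta^j,\eta^j)$, $j=1,\dots,k$, where $\zeta^1,\dots,\zeta^k$ are the roots of $p(z,\cdot)=w$ and $\eta^j = \bar p(\zeta^j,z)$, so that
\[ \restricted_p(\bar z^a \bar w^b)(z,w) = \frac{1}{k}\sum_{j=1}^k (\zeta^j)^a (\eta^j)^b. \]

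First I would specialize to $z=0$. Because $\alpha_0 = 1$ and $\alpha_k = 0$, the equation $p(0,\zeta)=w$ degenerates to $\zeta^k = w$, so the roots satisfy $(\zeta^j)^k = w$; and $\eta^j|_{z=0} = \bar p(\zeta^j,0) = \bar\alpha_k (\zeta^j)^k = 0$ since $\alpha_k=0$. These two degeneracies carry the whole argument, and they are exactly what the normalization $\alpha_0=1$, $\alpha_k=0$ was arranged to produce; morally this is the ``reduce $\bar z^k \equiv w \bmod z$'' idea flagged before the statement.

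Finally I would split into cases. If $b\ge 1$, each summand contains the factor $(\eta^j)^b$, which vanishes at $z=0$; hence $\restricted_p(\bar z^a\bar w^b)$ vanishes on $\{z=0\}$ and equals $O(z)$. If $b=0$, I evaluate the power sum $\frac1k\sum_j (\zeta^j)^a$ at $z=0$: writing $a = \mu k + r$ with $0\le r < k$ and using $(\zeta^j)^k = w$, each term becomes $w^\mu (\zeta^j)^r$, so the sum equals $w^\mu \cdot \frac1k\sum_j (\zeta^j)^r$. For $r=0$, i.e.\ $a\equiv 0 \bmod k$, this gives $w^{a/k}$, so $\restricted_p(\bar z^a) = w^{a/k} + O(z)$; for $0<r<k$ the power sum vanishes, so $\restricted_p(\bar z^a) = O(z)$. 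The only step needing care, and the one I expect to be the main (if mild) obstacle, is this last vanishing: at $z=0$ the elementary symmetric functions $s^1,\dots,s^{k-1}$ are all $0$ (they equal $\alpha_i z^i$), so Newton's identities force $\sum_j (\zeta^j)^r = 0$ for $1\le r\le k-1$; equivalently the $\zeta^j$ are the $k$ distinct $k$-th roots of $w$ and $\sum_j (\zeta^j)^r = 0$ whenever $k\nmid r$. Collecting the cases yields exactly the claimed formula.
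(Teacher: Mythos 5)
Your proof is correct, but it follows a genuinely different route from the paper's. The paper works with the generating function $R(s)=\sum_{a\ge 0}\restricted(\bar z^a)s^a$: multiplying the defining relation $w=\bar z^k+\sum_{j=1}^{k-1}\alpha_j z^j\bar z^{k-j}$ by $\bar z^a s^a$, applying $\restricted$, and summing over $a$ gives a functional equation whose solution is $R(s)=\bigl(1+O(z)\bigr)/\bigl(1+O(z)-s^kw\bigr)$; expanding this modulo $z$ as $\sum_\mu w^\mu s^{k\mu}$ gives the case $b=0$, while $b\ge 1$ follows because $\bar w=O(z)$ on $M_p$ --- the same degeneracy you express as $\eta^j|_{z=0}=0$. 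You instead evaluate the symmetric-function formula for $\restricted_p(\bar z^a\bar w^b)$ directly on the fiber over $\{z=0\}$, where it collapses to the $k$-th roots of $w$ with $\eta=0$, and finish with Newton's identities (equivalently, the vanishing of power sums of the roots of $\zeta^k=w$ for exponents not divisible by $k$). Your route is more elementary and makes transparent where the normalizations $\alpha_0=1$, $\alpha_k=0$ enter; the specialization to $z=0$ is legitimate since points $(0,w)$, $w\neq 0$, lie off the discriminant of $\pi_1|_{\mathcal{X}}$, so the averaging formula holds near them as an identity of holomorphic functions, and $\{z=0,\ w\neq 0\}$ is dense in $\{z=0\}$. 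Your argument also sidesteps a small unstated step in the paper's proof, namely that $\restricted(\bar z^a)=O(z)$ for $1\le a<k$, which is needed there to identify the numerator as $1+O(z)$ and which follows from the weighted homogeneity of Lemma~\ref{lem:averagehom}. What the generating-function approach buys in exchange is an exact closed formula for $R(s)$, not merely its reduction mod $z$; the paper reuses this finer information in the quadric example of \S\ref{sec:examplesofflat}.
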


\begin{proof}
Recalling that  $M_p$ is 
given by $w = p(z,\bar z)$ or 
equivalently by $\bar w = \bar p (\bar z, z)$,
where 
\[ p(z,\bar z) = \bar z^k + \sum_{j=1}^{k-1} \alpha_j z^j \bar z^{k-j},  \] 
we have that 
\[ \bar z^a  s^a w  = \sum_j  \alpha_j z^{j} \bar z^{k-j+a}  s^a.  \]
Applying $\restricted$ and summing over $a \in \N$,  we write
\[ R(s) = \sum_{a=0}^\infty \restricted (\bar z^a ) s^a , \quad R_c (s) = \sum_{\substack{a\\ a < c} } \restricted (\bar z^a ) s^a ,  \]
 and obtain
\[  R(s) w = \sum_{j=0}^{k-1} \alpha_j z^j 
\frac{R(s) - R_{k-j} (s)}{s^{k-j}}    \]
so that  
\[ R(s) = \frac{\sum_j \alpha_j z^j s^{j-k}  R_{k-j} (s)}{p(z,s^{-1}) - w} = \frac{1 + O(z)}{1 + O(z) -s^k w}. \]
This proves the claim for $b = 0$. 
For $b\geq 1$, we see from the defining equation of $M_p$ that $\bar w = O(z)$, so that $R(\bar z^a \bar w^b) = O(z)$ also. 
\end{proof}

\label{sub:the_quadric}
\begin{example} Let us discuss the standard quadric in our context. Its defining 
equation will be written as 
\begin{equation}
	\label{e:standardquad} w = \bar z^2 + 2 \mu z \bar z, \quad 0 \leq \mu < \infty
\end{equation}
(we use $\alpha_1 = 2 \mu$ because it simplifies some of the formulas which 
follow). The associated $\zeta^1$, $\zeta^2$ are given by
\[ \zeta^1 (z,w) = - \mu z + \sqrt{w + \mu^2 z^2} , \qquad \zeta^2 (z,w) = - \mu z - \sqrt{w + \mu^2 z^2}.\]
The restricted averaging operator is given by 
\[ \restricted \bar z = - \mu z, \quad \restricted \bar w = (1- 2 \mu^2) z^2, \quad   \]
In the case of the quadric, we can actually also give formulas for $\restricted$ based on 
\[ \begin{aligned}
\restricted \bar z^j &= \frac{1}{2} \left( \left( - \mu z + \sqrt{w + \mu^2 z^2} \right)^j + \left( - \mu z - \sqrt{w + \mu^2 z^2} \right)^j    \right) \\
&= (-1)^j \sum_{p=0}^{\lfloor{j/2}\rfloor} \binom{j}{2p} (\mu z)^{j-2p} (w + \mu^2 z^2)^{p}\\
&=   (-1)^j \sum_{p=0}^{\lfloor{j/2}\rfloor} \sum_{q=0}^{p}\binom{j}{2p} \binom{p}{q} (\mu z)^{j-2p+2q} w^{p-q}; 
\end{aligned}
\]
the formulas for
$\restricted \bar z^j \bar w^k$ can 
be worked out similarly. We also note that the generating function $R(s)$ from 
above is given explicitly by 
\[ R(s) = \frac{1 - \mu z +  2 \mu z s}{1 + 2 \mu z s - w}.\]
\end{example}

The real-valued formal holomorphic maps on a model manifold can be 
explicitly described as follows. 

\begin{thm}
 Assume that $f \in \fps{z,w}$ is real-valued (but not constant) on $M_p$, and that 
 $p(z,\bar z) \neq \bar z^k $. Then there exists a unique $\vartheta \in \R$
 such that  $e^{i \vartheta} p(z,\bar z) + e^{-i \vartheta} z^k = e^{- i \vartheta} \bar p (\bar z, z) + e^{i \vartheta }\bar z^k$ and  
 \[ f(z,w) = \sum_{j=0}^\infty f_j (e^{i \vartheta} w + e^{-i \vartheta} z^k)^j, \quad f_j \in \R.\] 
 \end{thm}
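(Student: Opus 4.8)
The plan is to exploit the weighted grading together with the explicit description of $\restricted_p$ to reduce the statement to a rigidity computation in each weighted degree. Write $W_\vartheta = e^{i\vartheta}w + e^{-i\vartheta}z^k \in \hompoly_{k,0}$. The first observation is that if $f$ is real on $M_p$ then $\averaging_p(\bar f) = f$, since $\averaging_p$ annihilates the defining ideal and reproduces holomorphic functions. Decomposing $f = \sum_m f^{(m)}$ into its weighted-homogeneous parts and using that $\averaging_p$ respects the grading (Lemma \ref{lem:averagehom}), this single identity splits as $\averaging_p(\overline{f^{(m)}}) = f^{(m)}$ for every $m$; that is, each homogeneous piece is separately real on $M_p$. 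So it suffices to understand, for each $m$, the real vector space $V_m \subset \hompoly_{m,0}$ of weighted-homogeneous polynomials of degree $m$ that are real on $M_p$, and to show $V_m = \{0\}$ when $k\nmid m$ while $V_{kj} = \R\,W_\vartheta^{\,j}$. The form $f=\sum_j f_j W_\vartheta^{\,j}$ with $f_j\in\R$ is then manifestly sufficient, so only this necessary structure must be proved.

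First I would settle the degree $k$ case and extract $\vartheta$. A basis of $\hompoly_{k,0}$ is $\{w, z^k\}$, and by Lemma \ref{l:generating} one computes $\restricted_p \bar w = \beta z^k$ and $\restricted_p \bar z^k = w + \gamma z^k$ for constants $\beta,\gamma$ depending on the $\alpha_j$. Imposing $\averaging_p(\overline{c_1 w + c_2 z^k}) = c_1 w + c_2 z^k$ gives $c_1 = \bar c_2$ together with $c_2(1-\beta) = \bar c_2 \gamma$. A direct computation, equivalent to demanding that $W_\vartheta$ be real on $M_p$, shows this is solvable by a nonzero element exactly when $\bar\alpha_{k-j} = e^{2i\vartheta}\alpha_j$ holds for all $j$ with a common $\vartheta$; the hypothesis $p \neq \bar z^k$ forces some $\alpha_j$ with $1 \le j \le k-1$ to be nonzero, so this pins $e^{2i\vartheta}$, hence $\vartheta$ modulo $\pi$, and makes $V_k$ at most one real dimension, namely $\R\,W_\vartheta$. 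The residual ambiguity $\vartheta \mapsto \vartheta+\pi$ merely replaces $W_\vartheta$ by $-W_\vartheta$ and is absorbed by the real coefficients $f_j$. The exclusion of $p = \bar z^k$ is essential: there the relations degenerate and $V_k$ becomes two-real-dimensional, as the $\mu = 0$ quadric shows.

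The heart of the proof is the general-degree rigidity: for $k \mid m$ I would show $V_m = \R\,W_\vartheta^{\,m/k}$, and for $k\nmid m$ that $V_m = 0$. Since $W_\vartheta^{\,m/k} \in V_m$ already, it is enough to prove that any $g \in V_m$ whose coefficient of the pure power $w^{m/k}$ vanishes is itself $0$; subtracting the correct multiple of $W_\vartheta^{\,m/k}$ reduces a general element to this case, and reality forces the multiple to be real. To organize this I would pass to a complexified functional equation: setting $u = \bar z$, reality of a homogeneous $f^{(m)}$ is equivalent to the invariance $\Phi = \Phi^\sigma$ of $\Phi(z,u) := f^{(m)}(z, p(z,u))$, where $\sigma(g)(z,u) = \overline{g(\bar u,\bar z)}$. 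Specializing $u = 0$ and $z = 0$ recovers the extreme-coefficient relations $c_0 = \bar c_{m/k}$ and $c_{m/k} = \bar c_0$; in particular if the top coefficient $c_{m/k}$ vanishes then so does $c_0$, so $g$ is divisible by both $z$ and $w$. The interior coefficients I would control by the explicit symmetric-function form of $\restricted_p$: writing $\averaging_p(\bar z^a \bar w^b) = \tfrac1k\sum_j (\zeta^j)^a (\eta^j)^b$ and using that the elementary symmetric functions of the $\zeta^j$ are $\alpha_j z^j$ and $-w$ (as in the proof of Lemma \ref{lem:averagehom}), the identity $\averaging_p(\bar g) = g$ becomes a linear system on the $c_b$ which I expect to be triangular in the $w$-degree, forcing $g = 0$.

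I expect this last step to be the main obstacle: the leading-term information in Lemma \ref{l:generating} is by itself insufficient, and one must genuinely track the interior coefficients through the recursion built into the generating function $R(s) = (1+O(z))/(1+O(z)-s^k w)$. Once $V_{kj} = \R\,W_\vartheta^{\,j}$ is established for all $j$ and $V_m = 0$ for $k\nmid m$, the decomposition $f = \sum_m f^{(m)}$ collapses to $f = \sum_j f_j W_\vartheta^{\,j}$; the reality of each $f_j$ follows because $W_\vartheta$ is real and nonconstant on $M_p$, so the restrictions $W_\vartheta^{\,j}|_{M_p}$ are linearly independent and $\sum_j (f_j - \bar f_j) W_\vartheta^{\,j}$ can vanish on $M_p$ only if every $f_j$ is real. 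Finally, the existence of $\vartheta$ is not assumed but derived: since $f$ is nonconstant some $f^{(m_0)} \neq 0$ lies in $V_{m_0}$, which by the rigidity forces the phase compatibility $\bar\alpha_{k-j} = e^{2i\vartheta}\alpha_j$ of the $\alpha_j$, and hence the existence (and uniqueness modulo $\pi$) of $\vartheta$.
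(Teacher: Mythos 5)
Your outline shadows the paper's proof in broad strokes (homogeneous decomposition, a degree-$k$ computation pinning down $\vartheta$, then degree-by-degree rigidity), but the version of rigidity your plan needs is false, and the version that is true is left unproved. The trouble starts at the splitting step. From reality of $f$ you correctly get $\averaging_p(\bar f)=f$, and by Lemma~\ref{lem:averagehom} this splits as $\averaging_p(\overline{f^{(m)}})=f^{(m)}$ for each $m$; but the inference ``that is, each homogeneous piece is separately real on $M_p$'' is the converse direction, which is not available for the single exponent $\ell=1$: by Theorem~\ref{thm:realhol}, reality is equivalent to the full family $\restricted(\bar g^{\ell})=g^{\ell}$, $\ell=1,\dots,k$. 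Since your degree-$k$ analysis and your rigidity program are run with the single linear condition, what you would actually have to prove is that the space $\tilde V_m=\{g\in\hompoly_{m,0}\colon \averaging_p(\bar g)=g\}$ is $\{0\}$ for $k\nmid m$ and $\R\,(e^{i\vartheta}w+e^{-i\vartheta}z^k)^{m/k}$ otherwise. That statement is false: for the quadric $w=\bar z^2+2\mu z\bar z$ with $\mu=1$ one has $\restricted\bar z=-z$, so $g=iz$ satisfies $\averaging_p(\bar g)=g$ even though $g$ is not real on $M_p$ and $k=2\nmid 1$; hence $\tilde V_1\neq\{0\}$, and no amount of tracking interior coefficients will make your linear system force $g=0$. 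The repair is the paper's peeling argument: use all $k$ conditions $\restricted(\bar f^{a})=f^{a}$, take lowest weighted-degree parts to get $\restricted(\bar P_{j_0}^{a})=P_{j_0}^{a}$ for $a=1,\dots,k$, conclude from Theorem~\ref{thm:realhol} that the lowest piece $P_{j_0}$ is genuinely real, subtract it, and induct on the order of vanishing. Reality of each piece --- not the $\ell=1$ identity --- is the hypothesis under which rigidity can be proved.

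Second, even for genuinely real homogeneous pieces, the rigidity you yourself flag as ``the main obstacle'' is never established; the proposal ends with the expectation that a linear system is triangular in the $w$-degree, which is not an argument. The paper closes this step with a short induction that your approach misses entirely: write a real homogeneous $P$ of weighted degree $j$ as $P=(e^{i\vartheta}w+e^{-i\vartheta}z^k)\,q(z,w)+r(z)$ by division with remainder, with $q$ homogeneous of degree $j-k$. Since $e^{i\vartheta}w+e^{-i\vartheta}z^k$ is itself real on $M_p$, reality of $P$ gives $(e^{i\vartheta}w+e^{-i\vartheta}z^k)(q-\bar q)=\bar r-r$ on $M_p$; a coefficient comparison of the same kind as in degree $k$ (via the representation of Lemma~\ref{lem:repmodel}) forces $r=0$, hence $q$ is real on $M_p$ and the induction hypothesis applies to $q$, giving $P=0$ when $k\nmid j$ and $P\in\R\,(e^{i\vartheta}w+e^{-i\vartheta}z^k)^{j/k}$ otherwise. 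Note finally a circularity in your closing paragraph: you propose to derive the existence of $\vartheta$ from rigidity at the lowest degree $m_0$ of $f$, but your rigidity statement presupposes that $\vartheta$ is already defined; when $m_0>k$ one must still extract the phase relation $\alpha_{k-j}=e^{2i\vartheta}\bar\alpha_j$ from a computation at degree $m_0$, which your sketch does not provide.
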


  \begin{proof} We first check that 
 it is enough to determine the weighted homogeneous polynomials $P(z,w)$ 
 with the property that $ \bar P (\bar z, \bar w) = P(z,w)$ on $M$, and 
 to show that these need to be of the form $P(z,w) = f_j (w+z^k)^j$. Indeed,
 if we know the statement for the polynomials, then given any $f \in \fps{z,w}$, 
 we decompose $f = \sum_{j=j_0}^\infty P_j$ with $P_{j_0} \neq 0$. 
 $f$ being real-valued implies that $\restricted (\bar f^a) = f^a $ for $a=1,\dots,k$, 
 so that   $\restricted (\bar P_{j_0}^a) = P_{j_0}^a $ for $a=1,\dots,k$ and thus $P_{j_0}$ 
 is real valued on $M$. The series $\tilde f = f - P_{j_0}$ is therefore of 
 higher vanishing order than $f$ and still real valued. Inductively, we 
 can rewrite $f$ as a sum of real-valued homogeneous polynomials.    

By homogeneity reasons, for $1\leq j < k$, every homogeneous polynomial of degree $j$ is 
necessarily a polynomial of $z$ and therefore never real-valued. 

 For a homogeneous polynomial of degree $k$,
 we have 
 $P_k (z,w) = a z^k + b w$, which restricted to $M_p$ becomes 
 \[ P_k (z, p(z)) = a z^k + b \bar z^k + \sum_{j=1} b \alpha_j z^j \bar z^{k-j}.  \]
 Hence $P_k$ is real-valued on $M_p$ if and only 
 if $a=\bar b$ and $ \bar b \bar \alpha_j = b \alpha_{k-j} $ for $j=1,\dots, k-1$. By assumption, at least one of the $\alpha_j$ is not zero, so that we can find the $\vartheta$ in the polar decomposition $b = f_k e^{- i \vartheta}$ from $p$ alone. Hence, we 
 have $P_k = f_k (e^{i \vartheta} w + e^{-i \vartheta} z^k)  $ as claimed. 

We now proceed by induction on the degree $j$ of $P$, and assume that 
we have proved our claim for all weighted homogeneous polynomials of 
lesser degree. After dividing by $(e^{i \vartheta} w+e^{-i \vartheta}z^k)$, we have that 
\[ P(z,w) = (e^{i \vartheta} w+e^{-i \vartheta}z^k) q(z,w) + r(z), \]
with $q$ weighted homogeneous of degree $j-k$. Since $P = \bar P$ on $M$, we 
have that 
\[ (e^{i \vartheta} w+e^{-i \vartheta}z^k) (q(z,w) - \overline{q(z,w)}) = \overline{r(z)} - r(z) \]
on $M$. Now both the left and the right hand side of this equation 
are imaginary-valued on $M$. One checks as before that this means that $r = 0$. 
It follows that $q (z,w) = \overline{q(z,w)}$, and so the induction hypothesis 
applies to $q$; if $q$ is of weighted degree not divisible by $k$, 
this means that $q=0$, while 
if $q$ is of  weighted degree divisible by $k$, $q$ is a real multiple of $(e^{i \vartheta} w+e^{-i \vartheta}z^k)^j$. 
\end{proof}

\bibliographystyle{abbrv}
\bibliography{bibfile}

\end{document}